\documentclass[12pt]{article}
\usepackage[letterpaper, left=1.2in,right=1.2in]{geometry}

\usepackage{amsmath, amsfonts, amssymb, amsthm}
\usepackage{verbatim, float, caption, subcaption, graphicx, epstopdf, enumitem, setspace}
\usepackage[colorlinks, citecolor=blue, linkcolor=blue]{hyperref}

\usepackage{natbib}
\theoremstyle{plain}
\newtheorem{theorem}{Theorem}

\newtheorem{lemma}[theorem]{Lemma}

\theoremstyle{remark}

\newcommand{\norm}[1]{\left\lVert#1\right\rVert}

\newcommand{\e}{\epsilon}

\newcommand{\R}{\mathbb{R}}
\newcommand{\Z}{\mathbb{Z}}
\newcommand{\Q}{\mathcal{Q}}

\newcommand{\E}{\mathbb{E}}
\newcommand{\Prob}{\mathbb{P}}
\renewcommand{\P}{\mathcal{P}}
\newcommand{\Var}{\text{Var}}

\renewcommand{\H}{\mathcal{H}}

\newcommand{\X}{\textbf{X}}

\newcommand{\K}{\mathcal{K}}

\let\temp\phi
\let\phi\varphi
\let\varphi\temp

\begin{document}

\title{Convergence of Independence Metropolis-Hastings Algorithms in High Dimensions: Iteration Lengths, Cutoff Profiles, and Diagnostic Unreliability}
\author{Austin Brown, Department of Statistics, \\ Texas A\&M University, TX, USA }
\maketitle

\begin{abstract} 
We characterize the high-dimensional convergence of the arithmetic mean from the independence Metropolis-Hastings algorithm initialized at its proposal. Convergence is measured in the worst-case expected absolute error over bounded functions. We prove necessary and sufficient conditions for the mean to converge in relation to the importance weight. We precisely describe the logarithmic cutoff profile of the sharp decay to convergence where the relative entropy between the target and proposal determines the required iteration length. These results make connections between the required iterations of independence Metropolis-Hastings and importance sampling. For concentrating Bayesian posteriors, we illustrate how the required iteration time can be estimated providing concrete guidance for simulations. We justify the logarithmic cutoff analysis by showing a standard cutoff phenomenon is absent and prove that popular convergence diagnostics based on simulation output can validate inaccurate simulations. 
\end {abstract}

\noindent\textit{MSC:} Primary 60J05; secondary 60J22

\noindent\textit{Keywords:} independence Metropolis-Hastings; Markov chain Monte Carlo;
cutoff phenomenon; MCMC convergence diagnostics.

\section{Introduction}

Metropolis-Hastings algorithms \citep{hastings:1998, tierney:1998} are essential for generating samples to accurately estimate many high-dimensional integrals in statistics.
The independence Metropolis-Hastings (IMH) algorithm is a simple version, yet still widely used as a final correction step for proposals generated from variational inference or generative artificial intelligence. 
Every proposal is drawn from a fixed distribution, independently of the current state, and a move from the previous state to the new state is accepted or rejected.
Simulating the IMH algorithm does not require evaluating the normalizing constant of the target or proposal measures and unlike in general Metropolis-Hastings, the proposals and importance weights can be generated in parallel for the IMH algorithm.
Theory developed for this algorithm is often a foundation for understanding more sophisticated Metropolis-Hastings algorithms.

This work is motivated by and draws strong parallels to the required number of samples for importance sampling in high dimensions \citep{chatterjee:diaconic:2018}.
Unlike the requirements in importance sampling \citep{chatterjee:diaconic:2018, agapiou:omiros:2017, SanzAlonso}, the required number of samples for an accurate simulation in general is not understood in high dimensions.
There are many key differences between the IMH algorithm and importance sampling as the IMH algorithm simulates a dependent Markov chain and the arithmetic mean is used to estimate integrals instead of a weighted mean.
The IMH algorithm has many interesting theoretical properties lacking in importance sampling such as convergence in total variation which is not available for importance sampling directly.

We characterize precisely the number of iterations for the high-dimensional $L^1$ convergence of the arithmetic mean of the IMH algorithm initialized at the proposal.
The IMH algorithm must reduce the initialization bias and explore enough points for the arithmetic mean to reduce the $L^1$ error and approximate an integral well.
We show an abrupt transition in the logarithmic number of iterations or a logarithmic cutoff phenomenon \citep{aldous:diaconis:1986, chatterjee:diaconic:2018}.
High dimensions here can mean a large state space dimension or large data set size.
This is useful for studying high-dimensional probability measures or concentration of posteriors for large sample sizes in Bayesian statistics.

Let $\Z_+$ denote the positive integers.
Let $n \in \Z_+$ be an index parameter and let $\Pi_n$ be a target probability measure on a Borel state space $\X_n$.
Let $\Q_n$ be a proposal probability measure on $\X_n$ and $T_n \in \Z_+$ be an iteration time.
Let $X_{n}(0) \sim \Q_n$ be initialized at the proposal, and $X_{n}(0), \ldots, X_{n}(T_n - 1)$ be samples from an independence Metropolis-Hastings (IMH) process.
We study the precise convergence behavior of the $L^1$ loss as $n \to \infty$
\begin{align}
R_{T_n}( \Q_n, \Pi_n) = \sup_{ \norm{f}_{\infty} \le 1 } \E\left( \left| \frac{1}{T_n} \sum_{ t = 0 }^{T_n-1} f(X_{n}(t)) - \int_{\X_n} f d\Pi_n \right| \right)
\label{eq:risk}
\end{align}
where the supremum is taken over Borel measurable functions $f : \X_n \to [-1, 1]$. 

The contributions of this article are summarized as follows.
Section~\ref{section:main_results} provides a complete high-dimensional convergence profile of the risk \eqref{eq:risk}.
Let $w_n$ be an indexed importance weight and we prove that if $T_n \to \infty$, then $w_n / T_n \to 0$ in $\Pi_n$ probability is necessary and sufficient for $R_{T_n}( \Q_n, \Pi_n) \to 0$.
Section~\ref{section:main_results} also proves an abrupt transition towards convergence of \eqref{eq:risk} through a logarithmic cutoff and cutoff profile.
Roughly, if the relative entropy $\E_{\Pi_n}\left( \log(w_n) \right) \to \infty$ and $\log(w_n) / \E_{\Pi_n}\left( \log(w_n) \right) \to 1$, we prove that $\log(T_n) \approx \E_{\Pi_n}\left( \log(w_n) \right)$ is the logarithmic required number of iterations for \eqref{eq:risk} to be accurate.
Section~\ref{section:applications} illustrates the theory on concentrating Bayesian posteriors in Bayesian logistic regression with Laplace proposals where the required number of iterations can be estimated concretely.
Section~\ref{section:failure_of_total_variation_cutoff} proves that a standard cutoff to equilibrium in total variation is not possible for the IMH process.
Section~\ref{section:diagnostic_failure} provides further motivation for the theory by showing unreliability of convergence diagnostics such as the R-hat \citep{vehtari:etal:2021} for the IMH process.
We prove convergence diagnostics can provide false positives and fail to be a reliable means of verifying convergence for the IMH algorithm.

The convergence behavior of the IMH algorithm has previously been studied often with  bounded importance weights \citep{tierney:1994}.
Previous results address eigenvalues on finite state spaces \citep{Liu} and hitting times \citep{MaciucaZhu}.
A quantitative characterization of geometric convergence from any initialization is known for bounded importance weights \citep{MengersenTweedie}. 
The exact convergence from fixed point starts in both total variation and Wasserstein distance has been developed with bounded importance weights \citep{Wang, BrownJones}. 
More recent results address unbounded importance weights \citep{DeligiannidisEtAl} and \citep{DaeijavadAsoodeh}.  
Other results instead consider asymptotic-variance criteria \citep{DeligiannidisLee} or study large deviations of the empirical measure \citep{MilinanniNyquist}. 
There is related but different work on continuous-time Metropolis generators and high-dimensional
independence-sampler jump limits \citep{ChoiHuang, LeeNeal}.

\section{Background and Motivation}
\label{section:background}

We consider a sequence of target measures indexed by some parameter such as the dimension or data size.
Let $(\X_n)_{n \ge 0}$ be a sequence of Borel measurable spaces equipped with their Borel sigma-algebra.
For each $n \in \Z_+$, we are interested in estimating integrals from a Borel target probability measure $\Pi_n$ using a proposal Borel probability measure $\Q_n$ on $\X_n$.
We will assume we may choose an importance weight $w_n = d\Pi_n / d\Q_n$ that is everywhere finite on $\X_n$, which is possible when $\Pi_n$ is absolutely continuous with respect to $\Q_n$.

Define the acceptance function $\alpha_n : \X_n \times \X_n \to [0, 1]$ by
\[
\alpha_n(x, y) = 1 \wedge \frac{w_n(y)}{w_n(x)}
\]
if $w_n(x) > 0$ and $1$ otherwise.
Denote the Dirac measure at $x$ by $\delta_{x}$.
Define the independence Metropolis-Hastings kernel for any $x \in \X_n$ and Borel $B \subseteq \X_n$ by
\[
\P_n(x, B) = 
\int_B \alpha_n(x, y) \Q_n(dy) + r_n(x) \delta_{x}(B)
\]
where the rejection probability is $r_n(x) = 1 - \int_{\X_n} \alpha_n(x, y) \Q_n(dy)$.
We then define $\P_n^0(x, B) = \delta_x(B)$ and for $t \ge 1$, we define $\P^t_n(x, B) = \int \P_n(y, B)  \P^{t - 1}_n(x, dy)$.

The marginal distribution of $X_n(t)$ is denoted by $\Q_n \P_n^t$.
With $X_{n}(0) \sim \Q_n$, this Markov kernel defines an IMH process as a discrete-time Markov process $(X_{n}(t))_{t \ge 0}$.
We will only consider the proposal start process throughout the manuscript.
We may view $\Q_n$ as the best approximation available to $\Pi_n$ and this is then the optimal initialization available in practice and the initialization we consider.

\section{High-dimensional Convergence Theory and Cutoff Profiles}
\label{section:main_results}

Define the $\e$-approximation time of a risk for any $\e \in (0, 2)$
\[
T_{\e}(\Q_n, \Pi_n) = \inf\{ T \in \Z_+ :  R_T(\Q_n, \Pi_n) \le \e \}
\]
and $\inf \varnothing = \infty$.
Our goal is to characterize the sharp decrease in the risk \eqref{eq:risk} visualized in Figure~\ref{fig:diagram}.
The goal here is to characterize the required number of samples for Metropolis-Hastings in high dimensions.
This is a sharp phase transition called cutoff.

\begin{figure}[t]
\centering
\includegraphics[width=0.8\textwidth]{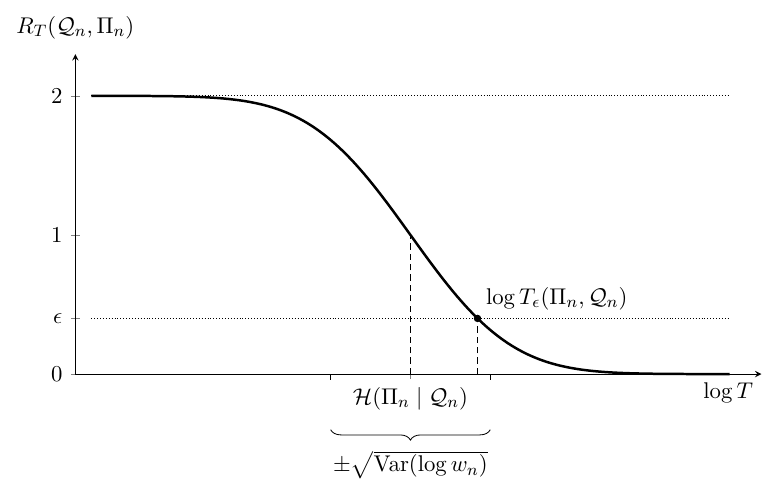}
\caption{A visualization of the cutoff for the IMH process. As $\log T$ passes the relative
entropy $\H(\Pi_n, \Q_n)$, the risk $R_T(\Q_n,\Pi_n)$ falls from $2$ to $0$ along
the profile $2\{1-\Phi(z)\}$ of Theorem~\ref{theorem:cutoff_result}.}
\label{fig:diagram}
\end{figure}

We have the following sharp characterization of high-dimensional convergence in terms of the importance weight.

\begin{theorem}
\label{theorem:convergence_characterization}
Suppose the iteration length $T_n \to \infty$.
Then,
\[
\lim_{n \to \infty} \Pi_n\left( \frac{w_n}{T_n}  \le \delta \right) = 1
\]
for every $\delta \in (0, 1)$ is equivalent to
\[
R_{T_{n}}(\Q_{n}, \Pi_{n}) \to 0.
\]
\end{theorem}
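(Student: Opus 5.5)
The plan is to exploit the explicit structure of the IMH chain started at $\Q_n$: the chain holds at each accepted state for a geometric number of steps with success probability $1-r_n(x)$. Rejection is small exactly where $w_n$ is small relative to typical weights. Two standard facts about the rejection probability drive everything:
\begin{align*}
1-r_n(x) = \int \Big(1\wedge \tfrac{w_n(y)}{w_n(x)}\Big)\Q_n(dy) \le \frac{1}{w_n(x)}, \qquad 1-r_n(x)\ge \Q_n\big(w_n \ge w_n(x)\big) .
\end{align*}
The first uses $\int w_n\, d\Q_n = 1$. I also use the expected holding time $\E(\text{holding}\mid x)=1/(1-r_n(x))$, and that under stationarity the chain visits $x$ at frequency proportional to $\Pi_n$.

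\emph{Necessity.} Suppose $\Pi_n(w_n/T_n > \delta)\ge \eta>0$ along a subsequence, and take $f=\mathbf 1_{A_n}-\mathbf 1_{A_n^c}$ with $A_n=\{w_n> \delta T_n\}$. I will show the empirical mean of $\mathbf 1_{A_n}$ is nearly $0$ with high probability. The chain must enter $A_n$ through an accepted proposal. From any state $x$ the chance of accepting a move into $A_n$ at a single step is at most $\int_{A_n}\alpha_n(x,y)\Q_n(dy)\le \Q_n(A_n)\le 1/(\delta T_n)$, by Markov's inequality applied to $w_n$ under $\Q_n$. Over the first $\epsilon T_n$ steps, entry therefore has probability at most $\epsilon/\delta$. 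After entry, the fraction of the remaining time spent in $A_n$ is bounded by one, so $\E \frac1{T_n}\sum \mathbf 1_{A_n}(X_n(t))\le \Q_n(A_n) +$ (expected post-entry time)$/T_n$. Bounding this directly via $\sum_{t<T_n}\Q_n\P_n^t(A_n)$ is cruder, so I will use the stronger estimate $\Q_n\P_n^t(A_n)\le (t+1)\Q_n(A_n)$, which follows by induction since $\P_n(x,A_n)\le \mathbf 1_{A_n}(x)+\Q_n(A_n)$. This gives $\E\frac1{T_n}\sum_{t}\mathbf 1_{A_n}(X_n(t))\le \frac{1}{T_n}\sum_{t<\epsilon T_n}(t+1)\Q_n(A_n)+(1-\epsilon)\cdot(\ldots)$. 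I expect the cleanest bound to be $\E\frac1{T_n}\sum_{t<T_n}\mathbf 1_{A_n}\le \frac{T_n+1}{2}\Q_n(A_n)$. That is useless when $\Q_n(A_n)\approx 1/(\delta T_n)$, so I will instead choose $A_n=\{w_n> M T_n\}$ with $M$ large. Having $\Pi_n(w_n/T_n>\delta)\not\to 0$ for some $\delta$ and every $\delta$ might force tightness issues, so I will split into the cases $\Pi_n(w_n>MT_n)\ge\eta$ for some large $M$ and its negation, treating the latter with $\delta$ small directly. Then the expected occupation is at most $1/(2M)$, while $\Pi_n(A_n)\ge\eta$, and $R_{T_n}\ge 2\eta - 1/M>0$.

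\emph{Sufficiency.} Write the error as bias plus fluctuation and bound $R_{T_n}$ by the total-variation-type quantity $\E\sup_f|\cdot|$ through a coupling or regeneration argument. IMH regenerates at every acceptance, since the accepted point is a fresh draw reweighted by $\alpha$. I will truncate: on $G_n=\{w_n\le \delta T_n\}$, with $\Pi_n(G_n)\to1$, the holding time at $x\in G_n$ has mean $1/(1-r_n(x))$. Using the second fact together with $\Q_n(w_n\ge c)\ge$ roughly $\Pi_n$-mass considerations, I will show $1-r_n(x)\ge c/w_n(x)$ up to $o(1)$ for $x\in G_n$. Hence holding times are $O(\delta T_n)$, and the chain produces on the order of $1/\delta$ nearly independent excursions. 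The fluctuation of the mean over bounded $f$ is then controlled by a variance bound of order $\delta$ plus $\Pi_n(G_n^c)$. The initialization bias is handled similarly, since $\Q_n\P_n^t\to\Pi_n$ in total variation once $t\gg$ the typical $w_n$.

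\emph{Main obstacle.} The hardest step is the uniform-in-$f$ fluctuation bound in the sufficiency direction. The supremum over $f$ sits inside the expectation only after fixing $f$, but the fixed $f$ is allowed to depend on $n$, so a per-$f$ variance bound suffices. The difficulty is proving that the holding-time distribution is genuinely of order $w_n(x)$. I would first try comparing the IMH path to the importance-sampling resampling construction, in which each proposal $y_i$ contributes time $\approx w_n(y_i)$, and then invoke Markov-inequality bounds akin to \citet{chatterjee:diaconic:2018}.
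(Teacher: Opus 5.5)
\textbf{Necessity.} Your argument does not close when $w_n/T_n$ stays of order a small constant. Even with the sharper $\Q_n(A_n)=\int_{A_n}w_n^{-1}\,d\Pi_n\le\Pi_n(A_n)/(MT_n)$, your occupation bound $\E\frac{1}{T_n}\sum_{t<T_n}\mathbf{1}_{A_n}(X_n(t))\le\frac{T_n+1}{2}\Q_n(A_n)$ beats $\Pi_n(A_n)$ only at levels $M>1/2$. The negated hypothesis does not put mass above such a level. If $w_n/T_n\to c<1/2$ in $\Pi_n$-probability, then $\Pi_n(w_n>MT_n)\to0$ for every $M>c$. For $A_n=\{w_n>\delta T_n\}$ with $\delta<c$, bounded convergence gives $T_n\Q_n(A_n)\to 1/c$, so your bound tends to $1/(2c)>1$. ``Treating the latter with $\delta$ small directly'' is not an argument.

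Averaging is the wrong tool in this regime. Take $\Pi_n=\Q_n(\cdot\mid S_n)$ with $\Q_n(S_n)=1/(cT_n)$: the mean occupation of $S_n$ tends to $1-c(1-e^{-1/c})$, within about $c$ of $\Pi_n(S_n)=1$. Work on an event instead. You already noted that from $x\notin A_n$ one step enters $A_n$ with probability at most $\Q_n(A_n)\le 1/(\delta T_n)$. Combined with $X_n(0)\sim\Q_n$, the chain avoids $A_n$ during the \emph{whole} run $t=0,\dots,T_n-1$ with probability at least $(1-1/(\delta T_n))^{T_n}\to e^{-1/\delta}$. On that event $f=\mathbf{1}_{A_n}-\mathbf{1}_{A_n^c}$ has error exactly $2\Pi_n(A_n)\ge 2\eta$. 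Hence $\liminf R_{T_n}\ge 2\eta e^{-1/\delta}>0$ along the subsequence, with no case split. This is the lower bound of Theorem~\ref{theorem:upper_lower_bound} that the paper uses.

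\textbf{Sufficiency.} This part is a heuristic whose load-bearing steps are false or missing.
\begin{itemize}
\item IMH does not regenerate at acceptances: the accepted point has law proportional to $\alpha_n(x,y)\Q_n(dy)$, which depends on the current state $x$.
\item Your ``second fact'' $1-r_n(x)\ge\Q_n(w_n\ge w_n(x))$ cannot yield $1-r_n(x)\gtrsim 1/w_n(x)$, because $\Q_n(w_n\ge s)$ can be exponentially small. What does hold is $1-r_n(x)\ge\Pi_n(C)/w_n(x)$ for $x\notin C=\{w_n\le u\}$, and $1-r_n(x)\ge 1-\Q_n(C)+\Pi_n(C)/u$ for $x\in C$.
\item Most importantly, short holding times on $G_n$ do not by themselves make the time average concentrate.
\end{itemize}
For the last point you need two things: (i) decorrelation at rate $\gtrsim 1/(\e T_n)$ on the bulk, and (ii) control of the time spent in $\{w_n>\e T_n\}$, where expected holding times are of order $w_n(x)$ and can exceed $T_n$.

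The paper gets (i) from the uniform minorization $\P_n(x,\cdot)\ge\lambda_{C_n}\nu(\cdot)$ for $x\in C_n=\{w_n\le\e T_n\}$, with $T_n\lambda_{C_n}\ge\Pi_n(C_n)/\e$. This gives a coupling bias at most $2\e\,\Q_n(C_n)/\Pi_n(C_n)$ and an $L^2$ fluctuation at most $\sqrt{2\e/\Pi_n(C_n)}$. It gets (ii) from the recursion $D_k\le 1+(1-\Pi_n(C_n)/w_n)D_{k-1}$ on $C_n^c$ for $D_k=d(\Q_n\P_n^k)/d\Q_n$. This bounds the expected occupation of $C_n^c$ by $\Pi_n(C_n^c)/\Pi_n(C_n)$. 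Your scaling intuition (about $1/\e$ effective excursions, variance of order $\e$) matches this. Without such a mechanism, though, the step you flag as the main obstacle is left unproved.
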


We can characterize more precisely the sharp decrease of the risk in terms of the relative entropy.
The main investigation is how the log weight concentrates towards the relative entropy $\H(\Pi_n, \Q_n) = \int\log(w_n) d\Pi_n$ so that in high dimensions,
\[
\log\left( w_n \right) \approx \H(\Pi_n, \Q_n).
\]
Let $N(0, 1)$ denote the standard Gaussian distribution and $\Phi$ be the distribution function of a standard Gaussian random variable.
We will characterize the logarithmic cutoff profile through the asymptotic normality of
\begin{align}
Z_n = \frac{\log(w_n) - \H( \Pi_n, \Q_n)}
{ \sqrt{ \Var_{\Pi_n}( \log(w_n) ) } }.
\label{eq:Z_n}
\end{align}

\begin{theorem}
\label{theorem:cutoff_result}
For $n \in\Z_+$, assume $\H(\Pi_n, \Q_n) < \infty$ and $\lim_{n \to \infty} \H(\Pi_n, \Q_n)) = \infty$.
If for any $\delta \in (0, 2)$,
\[
\lim_{n \to \infty} \Pi_n\left( \left| \frac{ \log(w_n)}{ \H(\Pi_n, \Q_n) } - 1 \right| \le \delta \right)
= 1,
\]
then for any $\e \in (0, 2)$,
\begin{align}
\lim_{n \to \infty} \frac{ \log( T_{\e}(\Q_n, \Pi_n) ) }{ \H(\Pi_n, \Q_n)  } = 1.
\label{eq:cutoff}
\end{align}
Assume $0 < \Var_{\Pi_n}( \log(w_n) ) < \infty$ and $\Var_{\Pi_n}( \log(w_n) ) \to \infty$ and
\[
Z_n \to N(0, 1)
\]
weakly in $\Pi_n$.
For $z \in \R$, let $T_n(z) = 1 \vee \lfloor \exp( \H( \Pi_n, \Q_n) + z \sqrt{ \Var_{\Pi_n}( \log(w_n) ) } ) \rfloor$ be an iteration time. 
Then
\begin{align}
\lim_{n \to \infty} R_{T_n(z)}(\Q_n, \Pi_n) = 2 (1 - \Phi(z)).
\label{eq:cutoff_profile}
\end{align}
\end{theorem}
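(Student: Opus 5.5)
The natural route is to reduce both parts of Theorem~\ref{theorem:cutoff_result} to a two-sided quantitative version of Theorem~\ref{theorem:convergence_characterization}. In the limit, the risk should behave like
\[
R_{T}(\Q_n,\Pi_n) \approx 2\,\Pi_n\left( w_n > T \right).
\]
So the first step is to prove, for fixed $\delta\in(0,1)$ and large $n$, a sandwich of the form
\[
2\Pi_n\left(w_n > T/\delta\right) - o(1) \le R_T(\Q_n,\Pi_n) \le 2\Pi_n\left(w_n > \delta T\right) + \eta(\delta) + o(1),
\]
uniformly in $T\to\infty$, where $\eta(\delta)\to 0$ as $\delta\to0$.

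\emph{Lower bound.} Take $f = 1_{A} - 1_{A^c}$ with $A = \{w_n \le T/\delta\}$. A proposal-started chain of length $T$ essentially never visits the set $\{w_n > T/\delta\}$. Each proposal lands there with $\Q_n$-probability $\int_{\{w_n>T/\delta\}} w_n^{-1}\,d\Pi_n \le \delta/T$, so a union bound over $T$ proposals gives probability at most $\delta$. On that event the empirical measure puts no mass on $A^c$, while $\Pi_n(A^c)$ is the target mass there. This yields $R_T \ge 2\Pi_n(A^c) - 2\delta$.

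\emph{Upper bound.} Split $f$ into its parts on $\{w_n\le \delta T\}$ and on the complement. On the complement the error is at most about $2\Pi_n(w_n>\delta T)$, since the chain's occupation there is small in expectation. On $\{w_n \le \delta T\}$, note that restricted to states of bounded weight the IMH kernel has a uniform Doeblin-type minorization $\P_n(x,\cdot) \ge (\delta T)^{-1}\,\Pi_n(\cdot)$ on low-weight regions. The intended mechanism is that the chain regenerates at rate comparable to $1/(\delta T)$ of the proposal, and the ergodic average over $T$ steps then has $L^1$ error controlled by renewal/variance arguments. I expect this is how Theorem~\ref{theorem:convergence_characterization} is proved. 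If that proof is quantitative in $\delta$, I would simply reuse it; otherwise I would redo it by truncating $w_n$ at level $\delta T$.

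\emph{First claim \eqref{eq:cutoff}.} Fix $\gamma>0$ and apply the sandwich with $T = e^{(1+\gamma)\H}$. The concentration hypothesis gives $\Pi_n(w_n > \delta e^{(1+\gamma)\H}) \to 0$, since $\log\delta$ is negligible against $\H\to\infty$. Hence $R_T \to 0 < \e$, so $\log T_\e \le (1+\gamma)\H$ eventually. With $T = e^{(1-\gamma)\H}$, the lower bound gives $R_T \ge 2 - 2\delta - o(1) > \e$ for small $\delta$. Since $R_T$ is essentially monotone (or at least the lower bound holds for all $T \le e^{(1-\gamma)\H}$ because $\Pi_n(w_n > T/\delta)$ decreases in $T$), we get $\log T_\e \ge (1-\gamma)\H$.

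\emph{Profile \eqref{eq:cutoff_profile}.} Write $\sigma_n^2=\Var_{\Pi_n}(\log w_n)$. Then
\[
\Pi_n\left(w_n > \delta^{\pm1} T_n(z)\right) = \Pi_n\left(Z_n > z \pm \tfrac{\log\delta}{\sigma_n} + o(1)\right).
\]
Here the floor contributes only $O(e^{-\H})$ on the log scale. Because $\sigma_n\to\infty$ and $\Phi$ is continuous, weak convergence gives the limit $1-\Phi(z)$ for both sides. Letting $\delta\to0$ afterward closes the sandwich at $2(1-\Phi(z))$.

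\emph{Main obstacle.} The main obstacle is the uniform upper bound, in particular controlling the initialization bias together with long rejection stretches started from moderately high weights. I would first try to prove it by coupling the chain with i.i.d.\ $\Pi_n$-like regenerations on the truncated weight, bounding the expected time spent stuck at points with weight in $(\delta T, \infty)$ by their $\Pi_n$-mass.
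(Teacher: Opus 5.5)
\textbf{Verdict: there is a genuine gap, in the upper half of your sandwich.}

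Your architecture matches the paper's. You compare $R_T(\Q_n,\Pi_n)$ two-sidedly with $2\Pi_n(w_n\gtrsim T)$ and then specialize; the paper does this via Theorem~\ref{theorem:upper_lower_bound} and Lemma~\ref{lemma:general_cutoff_profile}, with $L_n\to\infty$ in place of your fixed $\delta\to0$. Several parts are fine:
\begin{itemize}
\item the lower bound (no proposal lands in $\{w_n>T/\delta\}$);
\item the log-cutoff lower bound via monotonicity of $T\mapsto\Pi_n(w_n>T/\delta)$;
\item the profile computation.
\end{itemize}

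As stated, for arbitrary $T\to\infty$ with only $\eta(\delta)+o(1)$ of slack, the upper bound is false. The failing step is the claim that the chain's occupation of $\{w_n>\delta T\}$ is small in expectation. A state $x$ with $w_n(x)\in(\delta T,T/\delta]$ is reached with probability about $T\,\Q_n(dx)=T w_n(x)^{-1}\Pi_n(dx)$. Once there, the chain stays for a geometric time of mean at least $w_n(x)$, because the acceptance probability from $x$ is $\int (1/w_n(x))\wedge(1/w_n(y))\,\Pi_n(dy)\le 1/w_n(x)$. That is a fraction of order one of the run, so the window's expected occupation is comparable to its $\Pi_n$-mass.

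Here is a concrete counterexample.
\begin{itemize}
\item Let $\Q$ put mass $q/T$ on a non-atomic set $B$ of $\Pi$-mass $q$, so $w=T$ on $B$ and $w<1$ off $B$.
\item Let $p$ be the fraction of time spent in $B$, and take $f=1_{B^c}-1_B$. The error is $2|q-p|\ge 2(p-q)+4q\,1_{\{p=0\}}$.
\item A union bound over the $T$ draws gives $\Prob(p=0)\ge 1-q$, so $R_T\ge 2q+2\E(p)-4q^2$.
\item The chain enters $B$ with probability about $q$ and leaves at rate at most $1/T$, so $\E(p)\ge cq$ for an absolute $c>0$.
\end{itemize}
For small fixed $q$, this exceeds $2\Pi(w>\delta T)=2q$ by an amount independent of $\delta$.

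Your fallback (bounding time stuck above $\delta T$ by its $\Pi_n$-mass) does not rescue this. Neither does reusing the proof of Theorem~\ref{theorem:convergence_characterization} verbatim, which bounds the occupation by $\Pi_n(C^c)/\Pi_n(C)$. Both give only $R_T\lesssim 2\Pi_n(C^c)\{1+1/\Pi_n(C)\}$. That suffices for the log-cutoff upper bound, where $\Pi_n(C^c)\to0$, but not for the sharp constant $2(1-\Phi(z))$.

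\textbf{What is missing} is an occupation bound that exploits the factor $T/w_n$ on the far tail, together with a vanishing-window condition. The paper obtains it from the density recursion $D_{k+1}(x)\le 1+\{1-\Pi(C)/w(x)\}D_k(x)$ for $x\notin C$. This yields $\E\{T^{-1}\sum_{k<T}1_{C^c}(X_k)\}\le\int_{C^c}(T/w)\wedge\{1/\Pi(C)\}\,d\Pi$. Take $C=\{w_n\le\delta T\}$. The far tail $\{w_n>T/\delta\}$ then contributes at most $\delta$, and the window at most $\Pi_n(\delta T<w_n\le T/\delta)/\Pi_n(C)$. The correct upper bound is therefore
\[
R_T\le 2\Pi_n(w_n>\delta T)+\sqrt{2\delta/\Pi_n(C)}+\frac{2\delta}{\Pi_n(C)}+2\delta+\frac{2\Pi_n(\delta T<w_n\le T/\delta)}{\Pi_n(C)}.
\]
In your profile step you already show $\Pi_n(w_n>\delta^{\pm1}T_n(z))\to 1-\Phi(z)$. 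Hence the window term vanishes and $\Pi_n(C)\to\Phi(z)>0$. With this estimate inserted your argument closes, and this is exactly the role of the second hypothesis in Lemma~\ref{lemma:general_cutoff_profile}.

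\textbf{A smaller omission.} Both $T_n(z)\to\infty$ and the negligibility of the floor require $\H(\Pi_n,\Q_n)/\sigma_n\to\infty$, which is not assumed. Your ``$O(e^{-\H})$'' presupposes it. It does follow from $Z_n\to N(0,1)$ and $\int w_n^{-1}\,d\Pi_n\le 1$: Markov's inequality gives $\Pi_n(\log w_n<-\gamma\sigma_n)\le e^{-\gamma\sigma_n}$, which forces $\H(\Pi_n,\Q_n)/\sigma_n\to\infty$.
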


The second conclusion describes the phase transition in terms of the variance of the log weight or the varentropy which has more recently shown up in Markov process cutoff theory \citep{salez:2026}.
We develop the proofs of Theorem~\ref{theorem:convergence_characterization} and Theorem~\ref{theorem:cutoff_result}.
We first prove a general upper and lower bound on the risk.

\begin{theorem}
\label{theorem:upper_lower_bound}
For any Borel probability measures $\Pi, \Q$ on a Borel space $\X$ with importance weight $w = d\Pi / d\Q$ that is everywhere finite on $\X$, and for all $T \ge 1$,
\[
R_T(\Q, \Pi) 
\ge 2 \sup_{B \subseteq \X, B \text{Borel}} \left\{ \Pi(B) \left( 1 - Q(B) \right)^{T} \right\}.
\] 
Let $u > 0$ and $C = \{ w \le u \}$ such that $\Pi(C) > 0$ and let $\lambda_C = 1 - \Q(C) + \Pi(C) / u$.
Then
\begin{align*}
R_T(\Q, \Pi)
\le 
\sqrt{ \frac{2}{ T \lambda_C } } + \frac{2 \Q(C) }{ T \lambda_C }
+ 2 \int_{C^c} T \wedge \frac{w}{\Pi(C)} d\Q
+ 2 ( 1 - \Pi(C) ).
\end{align*}
\end{theorem}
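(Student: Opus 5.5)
For the lower bound, I will fix a Borel set $B$ and build a test function that detects when the chain has not yet visited $B$. Take $f = 1 - 2\cdot 1_B$, so $\norm{f}_\infty \le 1$ and $\int f\, d\Pi = 1 - 2\Pi(B)$. On the event $E$ that the chain starts outside $B$ and never proposes a point of $B$ during the first $T-1$ steps, every $X(t)$ lies in $B^c$. Then the arithmetic mean equals $1$, and the error equals $2\Pi(B)$. Proposals are i.i.d. $\Q$ and independent of $X(0)\sim\Q$, so $\Prob(E) = (1-\Q(B))^{T}$: one factor for $X(0)$ and $T-1$ factors for the proposals. Hence $R_T \ge 2\Pi(B)(1-\Q(B))^T$, and I take the supremum over $B$.

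For the upper bound I will use a coupling/splitting argument built on the set $C=\{w\le u\}$. The key observation is a minorization: for $x$ with $w(x)\le u$ the acceptance kernel satisfies $\alpha(x,y)\Q(dy) \ge (1\wedge w(y)/u)\Q(dy)$. More usefully, from any state the chain moves into a fresh draw distributed like $\Pi$ restricted to $C$ at rate comparable to $\Pi(C)/u$, plus the mass $1-\Q(C)$ from proposals outside $C$.

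I will decompose the risk into three parts. The first is a variance-type term for the stationary chain, which gives $\sqrt{2/(T\lambda_C)}$. The plan here is to bound $\E|\frac1T\sum (f(X_t) - \Pi f)|^2$ for a chain started at stationarity using the spectral gap, or a renewal bound, of the IMH kernel. That bound is at least $\lambda_C$-type: the rejection probability is at most $1-\lambda_C$ on $C$, since $1-r(x) \ge \int (1\wedge w(y)/w(x))\,\Q(dy)$. The second is a bias term coming from the initialization $\Q$ versus $\Pi$. I will compare $\Q$ with $\Pi$ via $w$: on $C$, $\Q$-starts are dominated by $\Pi$-starts after reweighting, giving the term $2\Q(C)/(T\lambda_C)$ through a geometric holding time of mean $1/\lambda_C$. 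The third part is the contribution of states in $C^c$. A $\Pi$-typical start at a large weight $w(x)$ is held for a geometric time whose mean is roughly $w(x)/\Pi(C)$, since acceptance requires proposing into $C$ and $\alpha\ge w(y)/w(x)$. Capping at $T$ and integrating against $\Q$ gives $2\int_{C^c} T\wedge \frac{w}{\Pi(C)}\, d\Q$, normalized by $T$ in the mean. The final term $2(1-\Pi(C))$ absorbs the truncation error from replacing $\Pi$ by $\Pi(\cdot\mid C)$.

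Concretely, I will realize the chain with i.i.d. proposals $Y_t\sim\Q$ and uniforms, and define the first time $\tau$ at which an accepted move lands in a state whose subsequent behavior is that of a stationary chain. This is the standard IMH fact that after the first acceptance from $x$, the law is proportional to $\alpha(x,y)\Q(dy)$; I will work with the last-step analysis of \citet{Wang} style exact formulas. I then bound $R_T \le \E[\tau\wedge T]\cdot 2/T + $ stationary error.

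I expect the main obstacle to be the stationary variance bound $\sqrt{2/(T\lambda_C)}$ with unbounded weights. The IMH kernel has no spectral gap in that case, so I would try first to apply the gap argument to the chain restricted to (or censored on) $C$, where the weights are bounded by $u$, and charge excursions into $C^c$ to the third and fourth terms.
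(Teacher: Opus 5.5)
Your lower bound is correct and is exactly the paper's argument (your $f=1-2\cdot 1_B$ is the negative of the paper's $\phi=I_B-I_{B^c}$). The upper bound has a genuine gap. It routes the main error through a chain started at stationarity: a stationary variance term, plus a time $\tau$ after which the chain behaves stationarily. With unbounded weights the stationary chain can be strictly worse than the proposal-start chain, so any bound of the form (nonnegative coupling term) $+$ (error of the stationary chain) can be larger than the right-hand side. On $\{a,b\}$ take $\Q(\{b\})=q=T^{-2}$, $\Pi(\{b\})=p\in(0,1/2)$ and $u=1$, so $C=\{a\}$ and $\lambda_C=1-p+q$. For large $T$ the third term of the bound equals $2qT=2/T$, so the right-hand side tends to $2p$, which is also the limit of $R_T(\Q,\Pi)$. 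The chain leaves $a$ with probability $q$ and leaves $b$ with probability $(1-p)q/p$ per step, so it essentially never moves within $T$ steps. Started from $\Pi$, with $f=I_{\{b\}}-I_{\{a\}}$, its mean absolute error tends to $4p(1-p)>2p$ and its mean squared error does not vanish, contradicting your claimed stationary bound of order $1/(T\lambda_C)$. Relatedly, no time $\tau$ with stationary future exists: after the first acceptance from $x$ the law is proportional to $(1\wedge w(y)/w(x))\Q(dy)$, not $\Pi$. Also, the refresh at rate $\lambda_C$ holds only from states in $C$, not ``from any state'': from $x\in C^c$ the chain reaches $C$ with probability only $\Pi(C)/w(x)$.

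You correctly identify the minorization on $C$ and the source of the term $2(1-\Pi(C))$. What is missing is how to use them without a stationary reference, which is what the paper's localization does.
\begin{itemize}
\item With $\Pi_C=\Pi(\cdot\mid C)$, center at $\Pi_C$, $\psi=\phi-\Pi_C(\phi)$. Split the error into $\frac1T\sum_k I_C\psi(X_k)$, $\frac1T\sum_k I_{C^c}\psi(X_k)$ and $|\Pi(\psi)|\le 2(1-\Pi(C))$.
\item For $x\in C$ one has exactly $\P(x,\cdot)=\lambda_C\nu+(1-\lambda_C)R_C(x,\cdot)$, with $\nu\propto\Q(\cdot\cap C^c)+\Pi(\cdot\cap C)/u$ and $R_C(x,\cdot)$ supported in $C$.
\item From $x\in C^c$ the chain enters $C$ only at $\Pi_C$-distributed points, because $\alpha(x,y)=w(y)/w(x)$ for $y\in C$.
\item Consequently $\P^t(I_C\psi)=I_C\,\Q(C)^t\P_C^t\psi$, where $\P_C$ is the IMH kernel with proposal $\Q(\cdot\mid C)$ and target $\Pi_C$. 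Excursions into $C^c$ add nothing to the correlations, which decay like $(1-\lambda_C)^t$.
\item The comparison chain is started from $\mu=\Q(\cdot\cap C^c)+\Q(C)\Pi_C$, not from $\Pi$. This keeps the $\Q$-law off $C$, and its time-$k$ marginals restricted to $C$ remain multiples of $\Pi_C$.
\item Coupling this chain with $X$ through the common regeneration gives $2\Q(C)/(T\lambda_C)$, and the covariance bound gives $\sqrt{2/(T\lambda_C)}$.
\end{itemize}
Your censoring fallback points in this direction, but without these facts it is not yet an argument.

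Finally, your holding-time estimate on $C^c$ covers only the initial state, and you describe it as a $\Pi$-start, which is the wrong law. The chain re-enters $C^c$ whenever it accepts a proposal there. This part is repairable along your lines. Apply the geometric bound, with success probability at least $\Pi(C)/w$, to each of the $T$ independent $\Q$-draws ($X(0)$ and the $T-1$ proposals) as a potential entry point. This bounds the expected fraction of time in $C^c$ by
\[
\frac{1}{T}\sum_{j=0}^{T-1}\int_{C^c}(T-j)\wedge\frac{w}{\Pi(C)}\,d\Q\le\int_{C^c}T\wedge\frac{w}{\Pi(C)}\,d\Q .
\]
The paper obtains the same bound from the density recursion $D_k\le(k+1)\wedge w/\Pi(C)$ on $C^c$, where $D_k=d(\Q\P^k)/d\Q$.
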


\begin{proof}[Proof of Theorem~\ref{theorem:upper_lower_bound}]
The problem that yields the lower bound is a tail distribution mismatch.
Let $B \subseteq \X$ be Borel.
Then we have
\[
\sup_{x \in B^c} \P(x, B) \le Q(B)
\]
and therefore,
\[
\inf_{x \in B^c} \P(x, B^c) \ge 1 - Q(B).
\]
Let $B_T^c = \{ X_k \in B^c, 0 \le k \le T - 1 \}$ and let $\phi = I_B - I_{B^c}$. 
Then on $B_T^c$,
\[
\frac{1}{T}\sum_{k = 0}^{T-1} \phi(X_k)  - \int_{\X} \phi d\Pi
= -2 \Pi(B).
\]
This gives the lower bound
\begin{align*}
\E\left( \left| \frac{1}{T}\sum_{k = 0}^{T-1} \phi(X_k)  - \int_{\X} \phi d\Pi \right| \right)
&\ge \E\left( \left| \frac{1}{T}\sum_{k = 0}^{T-1} \phi(X_k)  - \int_{\X} \phi d\Pi \right| I_{B_T^c}\right)
\\
&\ge 2 \Pi(B) \E\left( I_{B_T^c}\right)
\\
&\ge 2 \Pi(B) \left( 1 - Q(B) \right)^{T}.
\end{align*}

We now prove the upper bound.
The technique here is essentially that of a drift and minorization argument.
In particular, IMH samples $\Pi_C = \Pi(\cdot \mid C)$ and this can be controlled by a standard convergence argument conditioned on this set and Lyapunov control off of this set.
Let $\phi : \X \to \R$ with $\norm{\phi}_{\infty} \le 1$ and let $\psi = \phi -  \int_{\X} \phi(x) \Pi_C(dx)$.
This gives the upper bound through the decomposition
\begin{align*}
&\left| \frac{1}{T} \sum_{ k = 0 }^{ T - 1 } \phi(X_k) - \int_{\X} \phi d\Pi \right|
\\
&\le \left| \frac{1}{T} \sum_{ k = 0 }^{ T - 1 } I_{C}(X_k) \psi(X_k) \right|
+ \left| \frac{1}{T} \sum_{ k = 0 }^{ T - 1 } \psi(X_k) I_{C^c}(X_k) \right|
+  \left| \int_{\X} \psi d\Pi \right|.
\end{align*}
The last term is controlled by
\[ 
\left| \int_{\X} \phi(x) \Pi_C(dx) - \int_{\X} \phi d\Pi \right|
\le 2 \Pi(C^c)
= 2 (1 - \Pi(C)).
\]

The first term is bounded by a fairly standard convergence argument with bias and variance control.
Let $\P_C$ denote IMH with proposal $\Q_C = \Q(\cdot \mid C)$ and target $\Pi_C$.
This restricted Markov kernel satisfies a standard global minorization.
For any Borel $B \subseteq \X$ and $x \in C$,
\[
\P_C(x, B)
\ge \int_{B \cap C} \frac{\alpha(x, y)}{\Q(C)} \Q(dy)
\ge \frac{1}{\Q(C)} \int_{B \cap C} \frac{w(y)}{u} \Q(dy)
= \frac{\Pi(C)}{\Q(C) u} \Pi_C( B ).
\]
Changing the measure we can see $\frac{\Pi(C)}{\Q(C) u} \le 1$.
We can write the restricted residual kernel for $x \in C$, if $\frac{\Pi(C)}{\Q(C) u} < 1$
\[
R_C(x, \cdot) = \frac{\P_C(x, \cdot) - \frac{\Pi(C)}{\Q(C) u} \Pi_C(\cdot)}{1 - \frac{\Pi(C)}{\Q(C) u}}
\]
and if $\frac{\Pi(C)}{\Q(C) u} = 1$, $R_C(x, \cdot) = \Pi_C$.
Let $\lambda_C = 1 - \Q(C) + \frac{\Pi(C)}{u}$.
Let 
\[
\nu(\cdot) = \left[ \Q(\cdot \cap C^c) + \Pi(C) \Pi_C(\cdot) / u \right] / \lambda_C.
\]
On $C^c$, the proposal is accepted and so we may write for $x \in C$,
\begin{align*}
\P(x, \cdot) 
&=  \Q( \cdot \cap C^c ) + \Q(C) \P_C(x, \cdot)
\\
&= \Q( \cdot \cap C^c) + \frac{\Pi(C)}{u} \Pi_C(\cdot) + \left( \Q(C) - \frac{\Pi(C)}{u} \right) R_C(x, \cdot)
\\
&= \lambda_C \nu(\cdot) + (1 - \lambda_C) R_C(x, \cdot).
\end{align*}
So then
\[
\sup_{x, y \in C}\norm{\P(x, \cdot) - \P(y, \cdot)}_{TV}
\le 1 - \lambda_C.
\]
Let $\mu(\cdot) = \Q(\cdot \cap C^c) + \Q(C) \Pi_C(\cdot)$ and define the IMH process $(Y_t)_{t \ge 0}$ with $Y_0 \sim \mu$.
Let $S_T(\psi) = \sum_{ k = 0 }^{ T - 1 } I_{C}(X_k) \psi(X_k)$ and let $S'_T(\psi) = \sum_{ k = 0 }^{ T - 1 } I_{C}(Y_k) \psi(Y_k)$.
So then
\[
\norm{ \Q - \mu }_{TV}
\le \Q(C).
\]
We can control the bias with a coupling convergence argument.
On $C^c$, $X_0, Y_0$ are the same with probability $\Q(C^c)$ and on entering $C$, the split chain gives the rate to couple $1 - \lambda_C$.
The upper bound is
\[
\E \left| \frac{1}{T} S_T(\psi) - \frac{1}{T} S'_T(\psi) \right|
\le  \frac{ 2 \Q(C) }{T} \sum_{k = 0}^{T-1} ( 1 - \lambda_C )^k
\le \frac{2 \Q(C) }{ T \lambda_C }.
\]
We can also control the variance.
Define $\psi_C = \psi I_C$.
Then $\P^{t} \psi_C = I_C(x) \Q(C)^t \P_C^{t} \psi$ for $t \in \Z_+$.
For $i \le j$,
\[
\E( \psi_C(Y_i) \psi_C(Y_j) )
\le \left| \Q(C)^{j - i} \Pi_C( \psi_C \P_C^{j - i} \psi_C ) \right|
\le (1 - \lambda_C)^{j - i}.
\]
Then
\begin{align*}
\E \left| \frac{1}{T} S'_T(\psi) \right|^2
&\le \frac{1}{T} + \frac{2}{T} \sum_{k = 1}^{T-1}  \left( 1 - \frac{k}{T} \right)( 1 - \lambda_C )^k
\le \frac{ 2 }{ T \lambda_C }.
\end{align*}

The second term is controlled through an interesting reverse Lyapunov drift to $C$ on the density function.
For $k \in \Z_+$, let $D_k = \frac{d\Q \P^k}{d\Q}$ and $D_0 = 1$.
Then
\begin{align*}
\int_{B} D_{k + 1} d\Q
&= \Q \P^{k + 1}(B)
= \Q \P^{k} \P(B)
\\
&= \int_{B} \int_{\X} D_{k}(x) \alpha(x, y) \Q(dx) \Q(dy) + \int_{B} r(x) D_{k}(x) \Q(dx).
\end{align*}
Therefore for $\Q$ almost every $x \in \X$,
\[
D_{k + 1} (x)
= \int_{\X} D_{k}(y) \alpha(y, x) \Q(dy) +  r(x) D_{k}(x).
\]
For $x \not\in C$ and $y \in C$, $w(y) \le u < w(x)$ and $w(y) / w(x) \le 1$, and the acceptance is bounded below by
\[
\int_{\X} \alpha(x, y) \Q(dy)
\ge \int_{C} \frac{w(y)}{w(x)} \Q(dy)
= \frac{ \Pi(C) }{ w(x) }.
\]
By construction $\int_{\X} D_k(y) \Q(dy) = 1$.
We have by recursion for $x \not\in C$, $D_0 = 1$ and for $k \ge 1$,
\begin{align*}
D_{k} (x)
&\le 1 +  \left( 1 - \frac{ \Pi(C) }{ w(x) } \right) D_{k-1}(x)
\le \sum_{i = 0}^{k} \left( 1 - \frac{ \Pi(C) }{ w(x) } \right)^i.
\end{align*}
By Bernoulli's inequality,
\[
\sum_{i = 0}^{k} \left( 1 - \frac{ \Pi(C) }{ w(x) } \right)^i
\le \frac{ 1 - \left( 1 - \frac{ \Pi(C) }{ w(x) } \right)^{k + 1} }{ \frac{ \Pi(C) }{ w(x) }  }
\le (k + 1) \wedge \left( \frac{ w(x) }{ \Pi(C) } \right).
\]
Integrating over the initial condition for every $T \ge 1$,
\[
\E\left[ \frac{1}{T} \sum_{ k = 0 }^{ T - 1 } I_{C^c}(X_k) \right]
\le \frac{1}{T} \sum_{ k = 0 }^{ T - 1 } \int_{C^c} (k + 1) \wedge \left( \frac{ w(x) }{ \Pi(C) } \right) \Q(dx)
\le \int_{C^c} T \wedge \left( \frac{ w(x) }{ \Pi(C) } \right) \Q(dx).
\]

\end{proof}

This result allows us to prove a strong result for sequences of probability measures, but the result is less descriptive.

\begin{lemma}
\label{lemma:general_cutoff_profile}
Let $T_n \to \infty$ and assume there is an $L_n \to \infty$ such that
\begin{align}
&\lim_{n \to \infty} \Pi_n\left( \frac{w_n}{T_n}\le \frac{1}{L_n} \right) = h \in (0, 1]
&\lim_{n \to \infty} \Pi_n\left( \frac{1}{L_n} \le \frac{w_n}{T_n}\le L_n \right) = 0.
\label{eq:abstract_conditions}
\end{align}
Then
\[
\lim_{n \to \infty} R_{T_n}(\Q_n, \Pi_n) = 2 (1 - h).
\]
\end{lemma}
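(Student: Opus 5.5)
We will prove matching lower and upper bounds from Theorem~\ref{theorem:upper_lower_bound}. Throughout, write $A_n = \{ w_n \le T_n/L_n \}$ and $B_n = \{ w_n \ge T_n L_n \}$. By \eqref{eq:abstract_conditions}, $\Pi_n(A_n) \to h$ and $\Pi_n(B_n) \to 1-h$.

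\emph{Lower bound.} Apply the first inequality of Theorem~\ref{theorem:upper_lower_bound} with the set $B = B_n$. Since $\Pi_n(B_n) = \int_{B_n} w_n d\Q_n \ge T_n L_n \Q_n(B_n)$, we get $\Q_n(B_n) \le 1/(T_n L_n)$. Therefore
\[
(1 - \Q_n(B_n))^{T_n} \ge 1 - T_n \Q_n(B_n) \ge 1 - 1/L_n \to 1 .
\]
Hence $\liminf_n R_{T_n}(\Q_n,\Pi_n) \ge 2(1-h)$. This uses $\Pi_n(B_n) \to 1-h$, which follows from the two conditions in \eqref{eq:abstract_conditions}.

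\emph{Upper bound.} Apply the second inequality with $u = u_n = T_n/L_n$ and $C = C_n = A_n$. Since $h > 0$, we have $\Pi_n(C_n) > 0$ eventually, so the bound applies. We handle the four terms in turn.

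The last term is $2(1-\Pi_n(C_n)) \to 2(1-h)$.

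For the first two terms, we need $T_n\lambda_{C_n} \to \infty$. Note that
\[
\lambda_{C_n} \ge \frac{\Pi_n(C_n)}{u_n} = \frac{\Pi_n(C_n)\, L_n}{T_n},
\]
so $T_n \lambda_{C_n} \ge \Pi_n(C_n) L_n \to \infty$, using $h>0$ again. Since $\Q_n(C_n) \le 1$, both $\sqrt{2/(T_n\lambda_{C_n})}$ and $2\Q_n(C_n)/(T_n\lambda_{C_n})$ tend to zero.

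\emph{Main obstacle: the third term.} We must show $2\int_{C_n^c} T_n \wedge \frac{w_n}{\Pi_n(C_n)}\, d\Q_n \to 0$. Split $C_n^c$ into the middle region $M_n = \{T_n/L_n < w_n < T_n L_n\}$ and $B_n$.

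On $M_n$, bound the integrand by $w_n/\Pi_n(C_n)$. The contribution is then at most $\Pi_n(M_n)/\Pi_n(C_n) \to 0/h = 0$.

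On $B_n$, bound the integrand by $T_n$. The contribution is at most $T_n \Q_n(B_n) \le 1/L_n \to 0$.

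This is exactly where the decoupling of the scales $T_n/L_n$ and $T_n L_n$ is used. Without the vanishing middle band, the truncated integral $\int T \wedge w\, d\Q$ need not be small.

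\emph{Conclusion.} Combining the four terms gives $\limsup_n R_{T_n}(\Q_n,\Pi_n) \le 2(1-h)$, which matches the lower bound. The only delicate bookkeeping is the case $h=1$. There $\Pi_n(B_n)\to 0$, so the lower bound is trivially $0$, and the upper-bound argument goes through unchanged.
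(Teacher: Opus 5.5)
Your proposal is correct and follows the paper's proof essentially step for step. You use the lower bound of Theorem~\ref{theorem:upper_lower_bound} on $\{w_n \ge T_n L_n\}$ with Markov and Bernoulli, and the upper bound with $u = T_n/L_n$ and $C = \{w_n \le T_n/L_n\}$, splitting the third term into the middle band and the upper tail exactly as the paper does. Your explicit estimate $T_n\lambda_{C_n} \ge \Pi_n(C_n)L_n \to \infty$ also spells out a step the paper only asserts.
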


\begin{proof}
Assume $n \in \Z_+$ is large enough so $T_n L_n > 1$.
Define the set $C_n = \{ w_n / T_n \le L_n \}$.
For the lower bound, using the lower bound in Theorem~\ref{theorem:upper_lower_bound} with Markov's inequality and Bernoulli's inequality,
\begin{align*}
R_{T_n}(\Q_n, \Pi_n)
&\ge 2 \left[ 1 - \Pi_n\left( C_n \right) \right] \left( 1 - \Q_n\left( C_n^c \right) \right)^{T_n}
\\
&\ge 2 \left[ 1 - \Pi_n\left( C_n \right) \right] \left( 1 - \frac{1}{L_n T_n} \right)^{T_n}
\\
&\ge 2 \left[ 1 - \Pi_n\left( C_n \right) \right] \left( 1 - 1/L_n \right).
\end{align*}
Using the assumed conditions \eqref{eq:abstract_conditions}, as $n \to \infty$,
\[
\Pi_n\left( C_n^c \right)
= 1 - \Pi_n\left( \frac{w_n}{T_n} \le 1/L_n \right) - \Pi_n\left( 1/L_n < \frac{w_n}{T_n} \le L_n \right)
\to 1 - h.
\]
Therefore,
\[
\liminf_{n \to \infty} R_{T_n}(\Q_n, \Pi_n) \ge 2(1 - h).
\]

Define the set $D_n = \{ w_n / T_n \le 1/L_n \}$.
Let $\lambda_{n} = 1 - \Q_n(D_n) + L_n \Pi_n(D_n) / T_n$.
For the upper bound, we use Theorem~\ref{theorem:upper_lower_bound},
\begin{align*}
R_{T_n}(\Q_n, \Pi_n)
\le 
\sqrt{ \frac{2}{ T_n \lambda_{n} } } + \frac{2 \Q_n(D_n) }{ T_n \lambda_{n} }
+ 2 \int_{D_n^c} T_n \wedge \frac{w}{\Pi(D_n)} d\Q_n
+ 2 \Pi(D_n^c).
\end{align*}
Then $\lambda_{n} T_n \to \infty$ and the first two terms tend to $0$ as $n \to \infty$.
By \eqref{eq:abstract_conditions}, the last term $2 \Pi(D_n^c) \to 2 (1 - h)$.
It remains to show the remaining term vanishes and we have
\[
\int_{ \left\{ \frac{w_n}{T_n} > \frac{1}{L_n} \right\} } T_n \wedge \frac{w_n}{\Pi_n(D_n)} d\Q_n
= \int_{ \left\{ \frac{w_n}{T_n} > \frac{1}{L_n} \right\} } \frac{T_n}{w_n} \wedge \frac{1}{\Pi_n(D_n)} d\Pi_n
\]
and
\begin{align*}
\int_{ \left\{ \frac{w_n}{T_n} > \frac{1}{L_n} \right\} } \frac{T_n}{w_n(x)} \wedge \frac{1}{\Pi_n(D_n)} \Pi_n(dx)
&\le \int_{ \left\{ \frac{1}{L_n} < \frac{w_n}{T_n} \le L_n \right\} } \frac{T_n}{w_n(x)} \wedge \frac{1}{\Pi_n(D_n)} \Pi_n(dx)
\\
&\quad+ \int_{ \left\{ L_n < \frac{w_n}{T_n} \right\} } \frac{T_n}{w_n(x)} \wedge \frac{1}{\Pi_n(D_n)} \Pi_n(dx)
\\
&\le \frac{1}{\Pi_n(D_n)} \Pi_n\left( \frac{1}{L_n} < \frac{w_n}{T_n} \le L_n  \right) 
+ \frac{1}{L_n}.
\end{align*}
By the assumed conditions \eqref{eq:abstract_conditions}, both terms tend to $0$ as $n \to \infty$.
Therefore,
\[
\limsup_{n \to \infty} R_{T_n}(\Q_n, \Pi_n) \le 2(1 - h).
\]
\end{proof}

We are now in a position to prove Theorem~\ref{theorem:convergence_characterization} and Theorem~\ref{theorem:cutoff_result}.

\begin{proof}[Proof of Theorem~\ref{theorem:convergence_characterization}]
Assume $w_n / T_n \not\to 0$ in probability.
Then there are $\e, \delta \in (0, 1)$ and a subsequence satisfying $\Pi_{n_k}( w_{n_k} / T_{n_k} \ge \e) \ge \delta$.
Using the lower bound in Theorem~\ref{theorem:upper_lower_bound} and Markov's inequality, then for all sufficiently large $n$,
\begin{align*}
R_{T_{n_k}}(\Q_{n_k}, \Pi_{n_k})
&\ge 2 \Pi_{n_k}( w_{n_k} / T_{n_k} \ge \e) \left( 1 - \frac{1}{\e T_{n_k}} \right)^{ T_{n_k} }
\\
&\ge 2 \delta \left( 1 - \frac{1}{\e T_{n_k}} \right)^{ T_{n_k} }
\\
&\to 2 \delta \exp(-1/\e).
\end{align*}
Therefore, $R_{T_{n}}(\Q_{n}, \Pi_{n}) \not\to 0$. 

Now assume $w_n / T_n \to 0$ in probability.
Let $C_n = \{ w_n \le T_n \e \}$.
Then for any $\e > 0$, $\Pi_n(C_n^c) \to 0$.
Let $\lambda_{C_n} = 1 - \Q_n(C_n) + \Pi_n(C_n) / (\e T_n)$.
The upper bound in Theorem~\ref{theorem:upper_lower_bound} is
\begin{align*}
R_{T_n}(\Q_n, \Pi_n)
&\le \sqrt{ \frac{2}{ T_n \lambda_{C_n} } } + \frac{2 \Q(C_n) }{ T_n \lambda_{C_n} }
+ 2 \int_{C_n^c} T_n \wedge \frac{w_n}{\Pi(C_n)} d\Q_n
+ 2 \Pi(C_n^c)
\\
&\le \sqrt{ \frac{2 \e}{ \Pi(C_n) } } + \frac{2 \e \Q(C_n) }{ \Pi(C_n) }
+ 2 \frac{\Pi(C_n^c) }{ \Pi(C_n) } 
+ 2 \Pi(C_n^c).
\end{align*}
Since $\Pi(C_n) \to 1$ and $\e > 0$ is arbitrary, then $R_{T_{n}} \to 0$. 
\end{proof}

\begin{proof}[Proof of Theorem~\ref{theorem:cutoff_result}]
We first prove the logarithmic cutoff \eqref{eq:cutoff}.
Let $\e, \delta > 0$ and define 
\begin{align*}
&M^{+}_n(\delta) = \lfloor \exp[ (1 + \delta) \H(\Pi_n, \Q_n) ] \rfloor,
&M^{-}_n(\delta) = \lfloor \exp[ (1 - \delta) \H(\Pi_n, \Q_n) ] \rfloor.
\end{align*}
Let $0 < s < \delta$ and choose $L_n = \exp(s \H(\Pi_n, \Q_n))$.
Then
\[
\Pi_n(\log(w_n) - \log(M^{+}_n(\delta)) \le - \log(L_n))
\to 1.
\]
Therefore Lemma~\ref{lemma:general_cutoff_profile} yields $T_\e(\Q_n, \Pi_n) \le M^{+}_n(\delta)$ for sufficiently large $n$.
Now for the lower bound.
Let $B_n = \{ w_n \ge \exp[ (1-\delta/2) \H(\Pi_n, \Q_n) ] \}$ so that by the consistency assumption, $\Pi_n(B_n) \to 1$.
With Markov's inequality and Theorem~\ref{theorem:upper_lower_bound}, for any $T \le M^{-}_n(\delta)$
\begin{align*}
R_{T}(\Q_n, \Pi_n)
&\ge 2 \Pi_n(B_n) (1 - \Q_n(B_n))^T
\\
&\ge 2 \Pi_n(B_n) (1 - T \Q_n(B_n))
\\
&\ge 2 \Pi_n(B_n) \left\{ 1 - T \exp\left[ -(1-\delta/2) \H(\Pi_n, \Q_n) \right] \right\}
\\
&\to 2.
\end{align*}
It follows that for sufficiently large $n$
\[
M^{-}_n(\delta)\le
T_\e(\Q_n, \Pi_n)
\le M^{+}_n(\delta).
\]
Therefore taking $\delta \to 0$, then we obtain the conclusion.

We now prove the cutoff profile \eqref{eq:cutoff_profile}.
The Gaussian limit assumption implies 
\[
\E_{\Pi_n}(\log(w_n)) / \Var_{\Pi_n}( \log(w_n) ]^{1/2} \to \infty
\] 
or the Gaussian limit would fail.
For $z \in \R$, this implies $T_n(z) \to \infty$. 
Choose $L_n = \exp( [ \Var_{\Pi_n}( \log(w_n) ] )^{1/4} )$ and by assumption $L_n \to \infty$.
We will show the conditions of Lemma~\ref{lemma:general_cutoff_profile} hold.
By the assumed condition,
\begin{align*}
&\Pi_n\left( w_n / T_n(z) \le 1/L_n \right)
\\
&= \Pi_n\left( 
\frac{ \log(w_n) - \E_{\Pi_n}(\log(w_n)) }
{ \sqrt{ \Var( \log(w_n) ) } } 
\le \frac{ \log( T_n(z) / L_n ) - \E_{\Pi_n}(\log(w_n)) }
{ \sqrt{ \Var( \log(w_n) ) } } 
\right).
\end{align*}
Now,
\begin{align*}
\frac{ \log( T_n(z) / L_n ) - \E_{\Pi_n}(\log(w_n)) }{ \sqrt{ \Var( \log(w_n) ) } }
\to z.
\end{align*}
Since $\Phi$ is continuous, then 
$
\Pi_n\left( w_n/T_n(z) \le 1/L_n \right)
\to \Phi(z).
$
Similarly, 
\begin{align*}
\frac{ \log( T_n(z) L_n ) - \E_{\Pi_n}(\log(w_n)) }{ \sqrt{ \Var( \log(w_n) ) } }
\to z
\end{align*}
and so
$
\Pi_n\left( w_n/T_n(z) \le L_n \right)
\to \Phi(z).
$
Therefore, $\Pi_n\left( 1/L_n \le w_n/T_n(z) \le L_n \right) \to 0$.
\end{proof}

\section{Applications}
\label{section:applications}

We consider some informative examples that highlight the theory developed.
The Gaussian mismatch examples illustrate proposal mismatches to the target which are often exhibited in practice.
The Bayesian concentration example highlights the computability of the logarithmic cutoff profile.

\subsection{Application: Gaussian mean mismatches}

For $d \in \Z_+$, let $\X_d = \R^d$ and consider $\Pi_d \equiv N(\mu_d, I_d)$ and $\Q_d \equiv N(0, I_d)$ where $\norm{\mu_d} \to \infty$.
A standard computation yields
\[
\log(w_d(x))
= \mu_d^T x - \frac{1}{2} \norm{\mu_d}^2.
\]
Then $\H(\Pi_d, \Q_d) = \frac{1}{2} \norm{\mu_d}^2$ and $\Var_{\Pi_d}[ \log(w_d(x)) ] = \norm{\mu_d}^2$, so
\[
\frac{ \log(w_d(x)) - \frac{1}{2} \norm{\mu_d}^2 }
{ \norm{\mu_d} }
\]
is a standard $N(0, 1)$ random variable.
Theorem~\ref{theorem:cutoff_result} yields the cutoff profile
\[
\lim_{d \to \infty} R_{T_d(z)}(\Q_d, \Pi_d) = 2 (1 - \Phi(z))
\]
where for $z \in \R$, 
\[
T_d(z) = 1 \vee \left\lfloor \exp\left( \frac{1}{2} \norm{\mu_d}^2 + z \norm{\mu_d} \right) \right\rfloor.
\] 
In particular, choose $\mu_d \equiv (1, \ldots, 1)^T \in \R^d$ and the number of samples required can be exponential in the dimension since
$
T_d(z) = 1 \vee \left\lfloor \exp\left( \frac{d}{2} + z \sqrt{d} \right) \right\rfloor.
$

\subsection{Application: Gaussian variance mismatches}

Let $\X = \R^d$ and consider $\Pi_d \equiv N(0, I_d)$ and $\Q_d \equiv N(0, \tau^2 I_d)$ where $\tau > 0$ and $\tau \not= 1$.
We have
\begin{align*}
\log(w_d(x)) = d \log(\tau) + \frac{1/\tau^2 - 1}{2} \norm{x}^2.
\end{align*}
Under $\Pi_d$, $\norm{X}^2$ has mean $d$ and variance $2d$.
Then
\[
\frac{\log(w_d) - \E_{\Pi_d}[ \log(w_d) ]  }{\sqrt{ \Var_{\Pi_d}[ \log(w_d) ] }}
= \operatorname{sgn}(1/\tau^2 - 1)
\left[ \frac{\norm{X}^2- d}{\sqrt{2d}} \right]
\to N(0,1)
\]
weakly as $d \to \infty$.
For $z \in \R$, let 
\[
T_d(z) = 1 \vee \left\lfloor \exp\left( d \log(\tau) + \frac{d( 1/\tau^2 - 1 )}{2} + z \sqrt{ \frac{d}{2} } | 1/\tau^2 - 1 |  \right) \right\rfloor
\]
and with $\Phi$ the distribution function of a Gaussian, Theorem~\ref{theorem:cutoff_result} yields the cutoff profile
\[
\lim_{d \to \infty} R_{T_d(z)}(\Q_d, \Pi_d) = 2 (1 - \Phi(z)).
\]
In particular, the number of samples required is exponential in the dimension.

\subsection{Application: Bayesian Posterior Concentration}

Consider Bayesian logistic regression with a Gaussian prior and the IMH algorithm with an independent Laplace proposal.
Let $N \in \Z_+$ denote the size of the data and let $d \equiv d_N \in \Z_+$ denote the dimension of the features.
We will use the logarithmic cutoff profile to determine how many IMH iterations are needed to correct the approximation when computing posterior integrals.
Let $\tau \in (0, \infty)$ be a fixed variance parameter and we will assume a Gaussian prior $\beta \sim N(0, \tau^2 I_d)$.
Let $Y_i \in \{0, 1\}$ and $x_i \in \R^d$ for $i \in 1, \ldots, N$ and the posterior $\Pi_N \propto \exp(- U_N(\beta) ) d\beta$ has potential
\begin{align}
U_N(\beta)
&= \sum_{ i = 1 }^N \left[ \log( 1 + \exp( x_i^T \beta ) ) - Y_i x_i^T\beta \right] + \frac{1}{2 \tau^2} \norm{\beta}^2.
\end{align}
Let $\widehat{\beta}_N$ denote its unique minimal point, which exists.
Let $\widehat{H}_N=\nabla^2 U_N(\widehat{\beta}_N)$ and define the Laplace proposal
$
\Q_N \equiv N(\widehat{\beta}_N, \widehat{H}_N^{-1}).
$

We will consider the following assumptions on the data used in the posterior.
Assume the design rows $x_1,\ldots,x_N$ are independent
and have the form
\[
x_i = ( 1, Z^i_1, \ldots, Z^i_{d-1} )^T,
\qquad
\E(Z^i_j) = 0, \quad \E({Z^i_j}^2) = 1, \quad |Z^i_j| \le K
\]
for some fixed $K \in (0, \infty)$ and for each $i$, the coordinates $Z^i_1,\ldots,Z^i_{d-1}$ are independent.
Let $\sigma(t)=1/(1+e^{-t})$.
Assume for some fixed $B \in (0, \infty)$ and $\beta^*_{d}$ with $\norm{ \beta^*_{d} } \le B$, the responses are conditionally independent with
\begin{align}
Y_i \mid (x_j)_{j=1}^N \sim \text{Bernoulli}\bigl( \sigma( x_i^T \beta^*_{d} ) \bigr).
\end{align}
Write $\Prob_*$ and $\E_*$ for sampling probability and expectation
for $(x_i,Y_i)_{i=1}^N$.
The following constructs a cutoff profile that can be computed from the data in a simulation where the risk \eqref{eq:risk} studied here is conditioned on the data.

\begin{theorem}
\label{theorem:laplace_cutoff}
Let $(x_i, Y_i)_{i = 1}^N$ be samples from $\Prob_*$. Conditioned on the data, let $\Pi_N$ denote the Bayesian logistic posterior and $\Q_N$ the Laplace proposal defined in this section.
Assume $d^{5/3}/N\to0$.
Define the fitted shift vector
\begin{align}
v_N
&=\frac12\widehat H_N^{-1/2}
\sum_{i=1}^N\sigma''(x_i^T\widehat\beta_N)
(x_i^T\widehat H_N^{-1}x_i)x_i
\end{align}
and assume
$
\norm{v_N}\to\infty
$
in $\Prob_*$ probability.
For every $z \in \R$, with
$
T_N(z) = 1 \vee \left\lfloor \exp\left( \norm{v_N}^2/2 + z \norm{v_N} \right) \right\rfloor,
$
\begin{align*}
R_{T_N(z)}( \Q_N, \Pi_N )
&\to 2 ( 1 - \Phi(z) )
\end{align*}
in $\Prob_*$ probability.
\end{theorem}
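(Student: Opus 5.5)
We will reduce the statement to the second half of Theorem~\ref{theorem:cutoff_result}, or rather to Lemma~\ref{lemma:general_cutoff_profile}, applied conditionally on the data along subsequences. Convergence in $\Prob_*$ probability of $R_{T_N(z)}$ is equivalent to this: every subsequence has a further subsequence along which the convergence holds $\Prob_*$-almost surely. So it suffices to show that, on an event of $\Prob_*$ probability tending to one, the log weight under $\Pi_N$ satisfies
\[
\frac{\log(w_N) - \norm{v_N}^2/2}{\norm{v_N}} \to N(0,1)
\]
in probability in a quantitative sense, with $\norm{v_N}\to\infty$. The mechanism is the same as in the Gaussian mean mismatch example, with $\mu_d$ replaced by $v_N$.

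We will not use $\H(\Pi_N,\Q_N)$ and $\Var_{\Pi_N}(\log w_N)$ directly, since their finiteness and exact values are awkward. Instead we will check the hypotheses of Lemma~\ref{lemma:general_cutoff_profile} with $T_N = T_N(z)$ and $L_N = \exp(\norm{v_N}^{1/2})$. The proof of the profile part of Theorem~\ref{theorem:cutoff_result} only uses that $\log(w_N)$ is centered at $\norm{v_N}^2/2$ with scale $\norm{v_N}$ and has a Gaussian limit. This yields $h = \Phi(z)$ and hence the limit $2(1-\Phi(z))$.

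The core step is a local expansion of the log weight. Substitute $\beta = \widehat\beta_N + \widehat H_N^{-1/2} u$. Then
\[
\log w_N = -U_N(\beta) + U_N(\widehat\beta_N) + \tfrac12\norm{u}^2 + c_N,
\]
where $c_N$ is the log ratio of normalizing constants. A third-order Taylor expansion gives $-\tfrac16\nabla^3 U_N(\widehat\beta_N)[h,h,h] + O(N\norm{h}^4\cdot\text{bounded})$ with $h = \widehat H_N^{-1/2}u$. Under $\Pi_N$, $u$ is approximately $N(-v_N, I_d)$, or $N(v_N, I_d)$ depending on sign conventions. The mean shift of $\Pi_N$ relative to the Laplace proposal is driven by the cubic term, and the relevant quantity is exactly $v_N$: contracting $\nabla^3 U_N = \sum_i \sigma''(x_i^T\widehat\beta_N)\, x_i\otimes x_i\otimes x_i$ against the covariance yields $2\widehat H_N^{1/2} v_N$ after normalization.

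The plan is to show that, after absorbing $c_N$, $\log w_N = v_N^T u' - \tfrac12\norm{v_N}^2 + o_P(\norm{v_N})$ under $\Pi_N$, where $u'$ is standard Gaussian. This requires two approximations. First, the cubic form $-\tfrac16\nabla^3U_N[h,h,h]$ is dominated by its Gaussian-linear projection $\langle v_N, u\rangle$, since the centered cubic chaos part has variance of order $d^3/N$ relative to the needed scale. Second, the total variation distance between $\Pi_N$ and the shifted Gaussian is small enough.

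The main obstacle is the error control at the rate $d^{5/3}/N \to 0$. We need uniform spectral bounds $\widehat H_N \asymp N I_d$ with high $\Prob_*$ probability, which we will obtain from bounded sub-Gaussian designs via matrix Bernstein, together with $\norm{\widehat\beta_N - \beta^*_d} = O(\sqrt{d/N})$. We then need Laplace-type bounds showing that the quartic remainder, of size $d^2/N$, and the non-Gaussian cubic fluctuation, of size $(d^3/N)^{1/2}$, are $o(\norm{v_N})$ or merely $O_P(1)$. Here we intend to use that $\norm{v_N}\to\infty$ while only needing errors $o(\norm{v_N})$, since a shift of $o(\norm{v_N})$ in $\log w_N$ does not affect the profile. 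If the $d^{5/3}$ exponent proves too tight for crude bounds, we will refine by splitting $\nabla^3U_N$ into its expectation under $\Prob_*$ and a fluctuation, and bounding the tensor fluctuation operator norm by a chaining argument.
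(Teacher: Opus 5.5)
\textbf{There is a genuine gap.} Your reduction to Lemma~\ref{lemma:general_cutoff_profile} with $L_N=\exp(\norm{v_N}^{1/2})$ along almost surely convergent subsequences matches the paper. You also correctly identify $v_N$ as the linear (first-Hermite) part of the cubic term. The gap is your ``second'' approximation: that $\Pi_N$ is close in total variation to $\mathcal{G}_N=N(\widehat\beta_N-\widehat H_N^{-1/2}v_N,\widehat H_N^{-1})$. This is where all the work lies, and you give no mechanism for it.

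The paper proves it in Lemma~\ref{lemma:laplace_tv} using Katsevich's total variation bound (Lemma~2.11 there). That bound has three ingredients:
\begin{itemize}
\item local strong convexity $\nabla^2u_N\succeq mI_d$ on a ball of radius of order $\sqrt N$, which needs the uniform curvature bound of Lemma~\ref{lemma:laplace_curvature} over $\norm{\beta}\le B+1$, not merely $\widehat H_N\asymp NI_d$ at the mode;
\item tail estimates from the identity $\int z^T\nabla u_N\,dP_N=d$;
\item the gradient mismatch $\eta_N^2=\E_{G_N}\norm{\nabla u_N(Z_N)-Z_N-v_N}^2\to0$.
\end{itemize}
The hypothesis $d^{5/3}/N\to0$ enters precisely through the term $(d/N)\norm{v_N}^4\le Cd^5/N^3$ in $\eta_N^2$.

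Once this bound is available, your Taylor expansion of $\log w_N$ and the constant $c_N$ become superfluous. Write $\log w_N=\log(d\mathcal{G}_N/d\Q_N)+\log(d\Pi_N/d\mathcal{G}_N)$. The first term is exactly $\norm{v_N}^2/2-v_N^TZ^0_N$ under $\mathcal{G}_N$, and its law transfers to $\Pi_N$ by total variation. The second tends to zero in $\Pi_N$ probability, since $(1-e^{-\e})\Pi_N(|\log(d\Pi_N/d\mathcal{G}_N)|>\e)\le2\norm{\Pi_N-\mathcal{G}_N}_{TV}$. Without such a bound, your direct route must determine $c_N$ to accuracy $o(\norm{v_N})$. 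It must also control the law of $u$ under $\Pi_N$, which is an exponential tilt of $\Q_N$ by $w_N$ into a large-deviation regime. Errors that are $O_P(1)$ under a Gaussian reference do not transfer automatically there.

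\textbf{Your error bookkeeping also does not close under $d^{5/3}/N\to0$.} On the good event $\norm{v_N}^2\le Cd^2/N$, so:
\begin{itemize}
\item a quartic remainder of size $d^2/N\ge\norm{v_N}^2/C$ is not $o(\norm{v_N})$;
\item a cubic-chaos fluctuation of size $(d^3/N)^{1/2}\ge\sqrt{d}\,\norm{v_N}/\sqrt{C}$, which is what the crude bound $|s_{N,i}^Ts_{N,j}|\le\norm{s_{N,i}}\norm{s_{N,j}}$ gives, is not $o(\norm{v_N})$ either.
\end{itemize}
For $d=N^{0.55}$, say, both quantities diverge, so they are not $O_P(1)$ either.

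The deterministic part of the quartic term has to be absorbed into $c_N$, which again requires the normalizing constant to that precision. For the cubic part you need a sharp bound on the Frobenius-type quantity $\sum_{i,j}\sigma''(x_i^T\widehat\beta_N)\sigma''(x_j^T\widehat\beta_N)(s_{N,i}^Ts_{N,j})^3$. The paper gets $O(d^{3/2}/N+d^3/N^2)$ for it via the Schur product theorem and row independence, using $\E_*(|x_i^Tx_j|^3)\le Cd^{3/2}$ for $i\ne j$. Your fallback, an operator-norm chaining bound on $\nabla^3U_N$, does not control this quantity.

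\textbf{Sign slip.} Under $\Pi_N$ one has $\log w_N\approx\norm{v_N}^2/2-v_N^Tu'$. Your expression $v_N^Tu'-\tfrac12\norm{v_N}^2$ is the law under $\Q_N$. It would give $h=1$ and limiting risk $0$ in Lemma~\ref{lemma:general_cutoff_profile}.
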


\section{Absence of Standard Cutoff}
\label{section:failure_of_total_variation_cutoff}

We motivate the logarithmic cutoff theory in the $L^1$ risk \eqref{eq:risk} and prove that there is no total variation cutoff for the marginal distribution of the IMH process.
Define the total variation for two Borel probability measures $\nu, \nu'$ on $\X$
\[
\norm{ \nu - \nu' }_{TV}
= \sup_{f : \X \to [0, 1]} \left| \int f d\nu - \int f d\nu' \right|
\]
where the functions in the supremum are Borel measurable.
For $n \in \Z_+$ and $\e \in (0, 1)$, define
\begin{align*}
\tau_n(\e) = \inf\{ t \in \Z_+ \cup \{ 0 \} : \norm{ \Q_n \P_n^t - \Pi_n }_{TV} \le \e \}
\end{align*}
where $\inf \varnothing = \infty$.
Convergence of the IMH process exhibits a sharp transition to convergence or cutoff in total variation on a standard time scale if for $\e \in (0, 1/2)$, $\tau_n(1-\e) \to \infty$ and
\[ 
\lim_{n \to \infty} \frac{ \tau_n(\e) }{ \tau_n(1-\e) } = 1.
\]
The following shows this is not possible. Figure~\ref{fig:no_cutoff} provides a visualization.

\begin{figure}[t]
\centering
\includegraphics[width=0.8\textwidth]{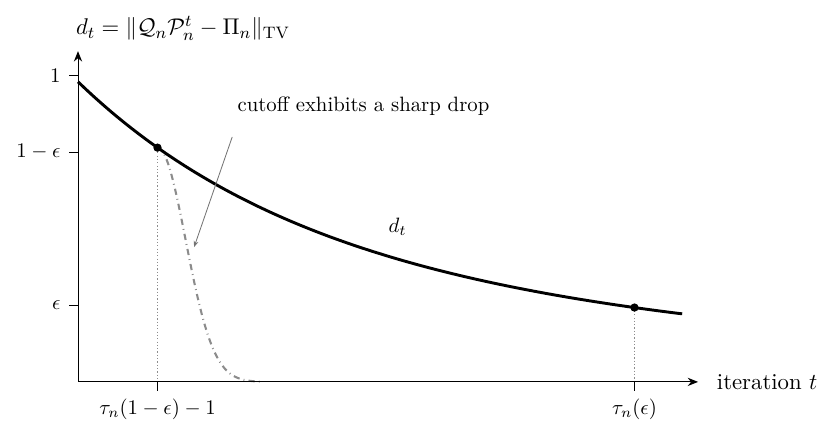}
\caption{A visualization of the main proof idea for Theorem~\ref{theorem:no_cutoff} showing the decay of total variation from time $\tau_n(1 - \e)$ to $\tau_n(\e)$ must be smooth.}
\label{fig:no_cutoff}
\end{figure}

\begin{theorem}
\label{theorem:no_cutoff}
For any $\e \in (0, 1/2)$, if $\tau_n(1-\e) \to \infty$, then
\[
\liminf_{n \to \infty} \frac{ \tau_n(\e) }{ \tau_n(1-\e) } 
> 1.
\]
\end{theorem}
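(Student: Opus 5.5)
The plan is to squeeze $\norm{\Q_n\P_n^t - \Pi_n}_{TV}$ between two functionals of the importance weight that both vary \emph{continuously} under rescaling of time $t \mapsto \lambda t$ with $\lambda$ near $1$. Such a continuous dependence rules out a drop from $1-\e$ to $\e$ on a window $[\tau_n(1-\e), \lambda \tau_n(1-\e)]$ with $\lambda \to 1$. Write $D_t = d\Q_n\P_n^t/d\Q_n$. From the density recursion in the proof of Theorem~\ref{theorem:upper_lower_bound},
\[
D_{t+1}(x) = \int D_t(y)\alpha_n(y,x)\Q_n(dy) + r_n(x)D_t(x) \le 1 + D_t(x),
\]
so $D_t \le t+1$. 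Consequently
\[
\norm{\Q_n\P_n^t-\Pi_n}_{TV} \ge \int (w_n - D_t)_+ d\Q_n \ge g_n(t+1), \qquad g_n(s) = \int \Big(1 - \frac{s}{w_n}\Big)_+ d\Pi_n .
\]

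The first key step is to record the elementary pointwise inequality $(1-\lambda a)_+ \ge \lambda(1-a)_+ - (\lambda - 1)$ for $\lambda \ge 1$ and $a \ge 0$. It gives $g_n(\lambda s) \ge \lambda g_n(s) - (\lambda-1)$, so $g_n$ can only drop by $O(\lambda-1)$ when its argument is multiplied by $\lambda$.

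The second key step, which I expect to be the main obstacle, is a matching upper bound of the form
\[
\norm{\Q_n\P_n^t-\Pi_n}_{TV} \le g_n(c\,t) + \eta(t)
\]
for a universal constant $c$ and an error $\eta(t)\to 0$ as $t\to\infty$. For this I would lower bound $D_t$ on $\{w_n > s\}$. Iterating the same recursion and using that moves into high-weight points are always accepted from lower-weight points, $D_{k+1}(x) \ge \Q_n\P_n^k(w_n \le w_n(x))$, and hence $D_t(x) \gtrsim t \wedge w_n(x)$ up to the mass that the chain still places above level $w_n(x)$. Combining this with $\norm{\cdot}_{TV} = \int (w_n - D_t)_+ d\Q_n$ should produce the upper bound. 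It is plausible that the clean comparison holds only with $c$ slightly different from $1$ and with an additive error. That is enough, provided the error is uniform in $n$ and $c$ is fixed.

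To conclude, suppose along a subsequence $\tau_n(\e) \le \lambda_n \tau_n(1-\e)$ with $\lambda_n \to 1$, and set $t_n = \tau_n(1-\e) - 1 \to \infty$. Then $1-\e < \norm{\Q_n\P_n^{t_n}-\Pi_n}_{TV} \le g_n(c t_n) + \eta(t_n)$. The lower bound at time $\tau_n(\e)$ gives $\e \ge g_n(\tau_n(\e)+1) \ge g_n(\lambda'_n c\, t_n)$, where $\lambda'_n = (\tau_n(\e)+1)/(c t_n)$. If $c \le 1$, then $\lambda'_n \to 1/c$; applying the scaling inequality with factor $1/c$ yields
\[
\e \ge \frac{1}{c}(1-\e) - \Big(\frac{1}{c} - 1\Big) - o(1).
\]
This is a contradiction when $\e<1/2$ and $c$ is close enough to $1$. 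So I would aim in the second key step for $c=1-o(1)$, or else track the constant through the scaling inequality to obtain a strict lower bound $\liminf \tau_n(\e)/\tau_n(1-\e) \ge \lambda^* > 1$ depending only on $\e$. The monotonicity of TV in $t$ for a reversible chain started at a density in $L^2$ makes $t\mapsto\norm{\Q_n\P_n^t-\Pi_n}_{TV}$ nonincreasing, which justifies working with $t_n$ and $\tau_n(\e)$ directly.
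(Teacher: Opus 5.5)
There is a genuine gap in your second key step, and it cannot be repaired within this strategy. As stated, the bound $\norm{\Q_n\P_n^t-\Pi_n}_{TV}\le g_n(ct)+\eta(t)$, with fixed $c$ and $\eta(t)\to0$ uniformly in $n$, is false. Take $\Q_n$ uniform on $[0,1]$ and $\Pi_n$ uniform on $A_n=[0,1/W_n]$ with $W_n\to\infty$. From a zero-weight point every proposal is accepted, and once in $A_n$ the chain never leaves, so $\norm{\Q_n\P_n^t-\Pi_n}_{TV}=(1-1/W_n)^{t+1}$. On the other hand $g_n(s)=(1-s/W_n)_+$. At $t=\lceil 2W_n/c\rceil$ you get $g_n(ct)=0$ while the total variation tends to $e^{-2/c}$, so $\eta$ cannot tend to $0$. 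The total variation decays like a mixture of geometric sequences, whereas $g_n$ is a truncated linear functional that vanishes in finite time. Iterating the recursion you invoke naturally produces geometric factors $r_n(x)^t$, not the truncation $(1-ct/w_n)_+$.

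Using the bound only at $t_n=\tau_n(1-\e)-1$ does not rescue the argument. In this example $t_n/W_n\to-\log(1-\e)$, so $g_n(ct_n)\ge 1-\e-o(1)$ forces $c\le \e/(-\log(1-\e))$. Your contradiction, or the tracked-constant variant giving $\liminf\ge c(1-\e)/\e$, requires $c>\e/(1-\e)$. These are incompatible as soon as $-\log(1-\e)\ge 1-\e$, i.e.\ for $\e$ above about $0.433$. The underlying obstruction is that sandwiching by $g_n$ at times $t+1$ and $ct$ loses a constant factor in time. The true separation here is $\tau_n(\e)/\tau_n(1-\e)\to\log\e/\log(1-\e)$, which tends to $1$ as $\e\uparrow 1/2$.

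What is missing is a multiplicative, log-convexity property of the total variation in time, obtained from an exact identity rather than from two-sided comparisons. The paper uses the Smith--Tierney representation of $\Q\P^k$ through the rejection function $b(u)=\int(1-w/u)_+\,d\Q$. With it, $d_k=\norm{\Q\P^k-\Pi}_{TV}=\sup_{s\ge0}D_k(s)$, where $D_k(s)$ is the difference of the distribution functions of $w$ under $\Q\P^k$ and under $\Pi$. The paper then shows $D_k(s)=\int_{[0,1)}v^k\,\nu_s(dv)$ for a subprobability measure $\nu_s$ supported on values of $b$, so each $D_k(s)$ is exactly a mixture of geometric sequences in $k$. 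Jensen's inequality gives $D_M(s)\le D_T(s)^{M/T}$, hence $d_T\ge d_M^{T/M}$ for $1\le M\le T$. Taking $M=\tau_n(1-\e)-1$ and $T=\tau_n(\e)$ yields $\tau_n(\e)/\tau_n(1-\e)\ge(1-1/\tau_n(1-\e))\log\e/\log(1-\e)$. Its limit exceeds $1$ precisely because $\e<1/2$, and the example above attains it.

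As a side remark, monotonicity of $t\mapsto\norm{\Q_n\P_n^t-\Pi_n}_{TV}$ holds for any $\Pi_n$-invariant kernel, without reversibility or $L^2$ assumptions. It is not needed anyway, since $d_{\tau_n(1-\e)-1}>1-\e$ by the definition of $\tau_n(1-\e)$.
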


\begin{proof}
We decompose $\Q\P^k$ into its singular part with respect to $\Pi$ and the residual part using ideas from \citep{smith:tierney} to get a multiplicative decomposition like a spectral decomposition.
For $t \in (0, \infty]$, define
\[
b(t) = \int_{\X} (1 - w/t)_+ \Q
\]
and $b(0) = \lim_{t \downarrow 0} b(t)$.
For $r > 0$ and integers $k \ge 0$, define $h_k(r) = \int_{r}^{\infty} \frac{ (k + 1) b(u)^k }{ u^2 } du$.
Using a change of variables \citep[Theorem 1 and Lemma 3]{smith:tierney},
we can write for $k \ge 0$
\[
\Q \P^k(dy) = b(0)^k I_{\{ w = 0 \} } \Q(dy) + h_k( w(y) ) I_{\{ w > 0 \} } \Pi(dy).
\]
We defined this so that at $k = 0$, this is well defined.

Define the pushforward measures $\eta = \Q \circ w^{-1}$ and $\mu = \Pi \circ w^{-1}$ where $w^{-1}$ is the inverse image of the weight function $w$.
For integers $k \ge 0$, define $\eta_k = \Q \P^k \circ w^{-1}$.
Using a change of variables like \citep[Theorem 1 and Lemma 3]{smith:tierney}, we can get
\[
\eta_k(dr) = b(0)^{k + 1} \delta_0(dr) + h_{k}(r) \mu(dr).
\]
We will use this decomposition to show that $\tau_n(\e) , \tau_n(1-\e) < \infty$ for every fixed $n \in \Z_+$.
Define $d_{k} = \norm{\Q \P^k - \Pi}_{TV}$.
Define $D_k(t) = \eta_k([0,t]) - \mu([0,t])$ for $t \in [0, \infty)$.
Since $h_k$ is nonincreasing, taking the supremum, we get the identity
\[
d_{k} = \sup_{t \ge 0} D_k(t)
= \int_{(0, \infty)} (1 - h_k(r))_+ \mu(dr).
\]
For $t \in [0, \infty)$, define $M(t) = \mu([0, t])$ and $M(u-) = \lim_{v \uparrow u} M(v)$.
Then also by \citep[Equation 7]{smith:tierney}, if $M(r) > 0$,
\[
h_{k}(r) = 1 + \int_{(r, \infty)} \frac{ b(u)^{k + 1} }{ M(u) M(u-) } \mu(du) - \frac{ b(r)^{k + 1} }{ M(r) }.
\]
So then,
\[
(1 - h_{k}(r))_+
\le 1  \wedge \frac{ b(r)^{k + 1} }{ M(r) }
\to 0.
\]
By dominated convergence, then $d_k \to 0$ and $\tau_n(\e) , \tau_n(1-\e) < \infty$ for every fixed $n \in \Z_+$.

We construct a type of multiplicative decomposition analogous to a spectral decomposition.
If $M(t) = 0$, then $D_k(t) = b(0)^{k + 1}$.
If $M(t) > 0$, then using Fubini's theorem
\begin{align*}
D_k(t) 
&= \mu((t,\infty)) - \eta_k((t, \infty))
\\
&= \int_{ r \in (t, \infty) } \frac{ b(r)^{k + 1} }{ M(r) } \mu(dr) - \int_{ r \in (t, \infty) }\int_{ u \in (r, \infty) } \frac{ b(u)^{k + 1} }{ M(u) M(u-) } \mu(du) \mu(dr)
\\
&= \int_{ r \in (t, \infty) } \frac{ b(r)^{k + 1}  }{ M(r) } \mu(dr) 
- \int_{ u \in (t, \infty) } \frac{ b(u)^{k + 1}  [M(u-) - M(t)]}{ M(u) M(u-) } \mu(du)
\\
&=  M(t) \int_{ r \in (t, \infty) } \frac{ b(r)^{k + 1} }{ M(r)  M(r-) } \mu(dr).
\end{align*} 
We have proven that there is a type of multiplicative decomposition where there is a subprobability measure $\nu_t$ such that
\[
D_k(t) = \int_{[0, 1)} v^k \nu_t(dv)
\]
and $\nu_t([0, 1)) = D_0(t)$.
In particular, for every Borel $A \subseteq [0,1)$, the measure is defined by
\[
\nu_t(A)=
\begin{cases}
 M(t)\displaystyle\int_{(t,\infty)}
       \mathbf 1_{\{b(u)\in A\}}
       \frac{b(u)}{M(u)M(u-)}\,\mu(du),&M(t)>0,\\[2ex]
 b(0) \,\mathbf 1_A(b(0)), &M(t)=0.
 \end{cases}
\]
We use Jensen's inequality as in \citep[Lemma 2.2]{rockner:wang:2001}.
If $D_0(t) = 0$, then $D_k(t) = 0$ for $k \ge 0$ and so we will assume $D_0(t) > 0$.
This gives for $1 \le M \le T$,
\begin{align}
D_M(t) / D_0(t) &= \int_{[0, 1)} (v^T)^{M/T} \nu_t(dv) / D_0(t)
\le \left( \int_{[0, 1)} v^T \nu_t(dv) / D_0(t) \right)^{M/T}
\\
D_M(t) &\le D_T(t)^{M/T} D_0(t)^{1 - M/T} \le D_T(t)^{M/T}
\\
d_{T} &\ge d_{M}^{T/M}.
\label{eq:jensen_tv}
\end{align}
Now define for $n, k \in \Z_+$, $d_{n, k} = \norm{\Q_n \P_n^k - \Pi_n}_{TV}$.
Applying \eqref{eq:jensen_tv} and using the definition of the mixing times and the fact that the mixing times are finite, we have
\begin{align*}
(1 - \e)^{ \frac{ \tau_n(\e) }{ \tau_n(1-\e) - 1 }} 
< d_{n, \tau_n(1-\e) -1}^{ \frac{ \tau_n(\e) }{ \tau_n(1-\e) - 1 }} 
\le d_{n, \tau_n(\e)} \le \e.
\end{align*}
Taking the logarithms gives
\[
\frac{ \tau_n(\e) }{ \tau_n(1-\e) } \ge \left( 1 - \frac{ 1 }{ \tau_n(1-\e) } \right) \frac{ \log(\e) }{ \log(1 - \e) }.
\]
Since $\lim_{n \to \infty} \tau_n(1-\e) = \infty$, taking the limit completes the result.
\end{proof}

\section{Unreliability of Convergence Diagnostics}
\label{section:diagnostic_failure}

The goal of this section is to further motivate the previously developed high-dimensional convergence theory.
Convergence diagnostics are often used to validate convergence for Metropolis-Hastings independence samplers.
The hope is that this could allow bypassing a convergence analysis entirely.
We will show that a convergence diagnostic has a nontrivial probability to validate both an accurate simulation and an inaccurate simulation.
In other words, we show convergence diagnostics based on the simulation output do not provide a reliable diagnostic in general. 

An interesting example is the formulation of Gelman's $\hat{R}$ \citep{gelman:rubin:1992} for testing convergence in MCMC.
For an index $j = 1, \ldots, J$, consider multiple IMH processes $(X^j_n(t))_{t \ge 0}$ each with Markov kernel $\P_n$ with target $\Pi_n$ and proposal $\Q_n$. 
We use the variance-ratio statistic \citep{vehtari:etal:2021}. 
For $J \ge 2$ independent chains of length $k \ge 2$ states, let $f : \X \to \R$ be Borel measurable and write $f_{j,i} = f(X^j(i))$, and define the sample mean and sample variance 
$
\bar{f}_j =\frac1k\sum_{i=0}^{k-1} f_{j,i}
$
and
$S_j^2(f) = \frac1{k-1} \sum_{i=0}^{k-1}(f_{j,i}-\bar{f}_j)^2.
$
Define the mean of the sample means
$
\bar{f} = \frac{1}{J} \sum_{j=1}^J \bar f_j
$
and the scaled sample variance of the sample means
$
\overline{B}(f) =\frac{k}{J-1}\sum_{j=1}^J (\bar{f}_j - \bar{f})^2
$
and the mean of the sample variances
$
\overline{S^2}(f) =\frac1J \sum_{j=1}^J S^2_j(f).
$
The $\hat{R}(f)$ convergence diagnostic is defined by
\begin{align*}
\hat{R}(f) =\sqrt{\frac{(k-1) \overline{S^2}(f) + \overline{B}(f)}{k \overline{S^2}(f)}}
\end{align*}
if $\overline{S^2}(f) > 0$ and $\hat{R}(f) = \infty$ if $\overline{S^2}(f) = 0$. 
A value $\hat{R}(f) \approx 1$ would mean the diagnostic validates convergence.

Consider an alternative sequence of IMH kernels $\K_n$ with target $\nu_n$ and proposal $\Q_n$.
Let $B_n \subseteq \X_n$ be a sequence of Borel sets such that the weights differ only on $B_n$, that is, $d\Pi_n / d\Q_n \propto d\nu_n / d\Q_n$ on $B_n^c$ and such that convergence happens
\begin{align}
&R_{k_n}(\Q_n, \nu_n) \to 0
\label{eq:comparison_assumption}
\end{align}
as $n \to \infty$.

\begin{theorem}
\label{theorem:Rhat_failure}
Assume $B_n$ is a sequence of Borel sets and $\K_n$ is an alternative IMH kernel with target $\nu_n$ satisfying \eqref{eq:comparison_assumption}.
Fix $J \ge 2$ and let $k_n \ge 2$ such that $k_n \to \infty$.
Assume the proposal visits the sets $B_n$ so that
\begin{align*}
&\rho_n = 1 - \{ 1 - \Q_n(B_n) \}^{J k_n} \to 0
\end{align*} 
as $n \to \infty$.
Let $f_n : \X_n \to \R$ be a sequence of Borel measurable bounded functions with $\norm{ f_n }_{\infty} \le 1$.
Assume for some $\sigma^2_* > 0$, the variance is not degenerate,
$
\Var_{\nu_{n}}( f_n ) \ge \sigma^2_*
$
and the means differ, that is, for some $\Delta > 0$ and all sufficiently large $n \in \Z_+$,
\begin{align*}
&|\Pi_n(f_n) - \nu_n(f_n)| \ge \Delta.
\end{align*}
Then for every fixed $\tau > 1$ and $0 < \delta < \Delta$, the $\hat{R}(f_n)$ diagnostic gives a false positive with high probability for the original IMH process
\begin{align*}
\Prob\left(
\hat{R}(f_n) \le \tau, \quad
\min_{1\le j\le J} \left| \frac{1}{k_n} \sum_{i = 0}^{k_n - 1} f_{n}(X_{j}(i)) - \Pi_n(f_n) \right| > \delta \right) \to 1
\end{align*}
as $n \to \infty$.
\end{theorem}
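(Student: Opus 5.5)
Our approach is a coupling argument: with high probability the $J$ chains driven by $\P_n$ cannot be told apart from $J$ independent chains driven by $\K_n$. Build both families from the same proposals and the same uniforms. For chain $j$ and time $i$, draw $Y_{j,i} \sim \Q_n$ and $U_{j,i} \sim \mathrm{Unif}(0,1)$, all independent, and start both at $X_j(0) = X'_j(0) = Y_{j,0}$. Each process accepts according to its own acceptance ratio. Let $G_n$ be the event that none of the $J k_n$ draws $Y_{j,i}$ ($i \le k_n - 1$) lands in $B_n$. Then $\Prob(G_n^c) \le \rho_n \to 0$.

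The key step is that on $G_n$ the two trajectories coincide. Every state visited is either an initial draw or an accepted proposal, so on $G_n$ every state lies in $B_n^c$. There $d\Pi_n/d\Q_n = c_n\, d\nu_n/d\Q_n$ for some constant $c_n>0$, so the ratios $w(y)/w(x)$ agree for the two weights. Hence both processes make identical accept/reject decisions, and by induction $X_j(i) = X'_j(i)$ for all $i<k_n$ and all $j$. Consequently $\hat R(f_n)$ and the chain means $\bar f_j$ computed from the $\P_n$ chains equal those computed from the $\K_n$ chains on $G_n$. It therefore suffices to show that, for the $\K_n$ chains, $\Prob(\hat R'(f_n)\le\tau)\to1$ and $\Prob(\min_j|\bar f'_j - \Pi_n(f_n)|>\delta)\to 1$.

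For the second claim, apply \eqref{eq:comparison_assumption} to the test function $f_n$: $\E|\bar f'_j - \nu_n(f_n)| \le R_{k_n}(\Q_n,\nu_n) \to 0$. So $\bar f'_j - \nu_n(f_n)\to 0$ in probability for each of the finitely many $j$. Since $|\Pi_n(f_n)-\nu_n(f_n)|\ge\Delta>\delta$, it follows that $\min_j|\bar f'_j-\Pi_n(f_n)| \ge \Delta - \max_j|\bar f'_j-\nu_n(f_n)| > \delta$ with probability tending to one.

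For $\hat R'$, write
\[
\hat R'^2 = \frac{k_n-1}{k_n} + \frac{\overline B(f_n)}{k_n\overline{S^2}(f_n)},
\qquad
\frac{\overline B(f_n)}{k_n} = \frac{1}{J-1}\sum_j(\bar f'_j-\bar f')^2 .
\]
The right-hand quantity tends to $0$ in probability, because all $\bar f'_j$ converge to the common value $\nu_n(f_n)$. Since $\tau^2>1$, it therefore suffices that $\overline{S^2}(f_n)$ stays bounded away from zero, say $\Prob(\overline{S^2}(f_n)\ge\sigma_*^2/2)\to1$. This is the main obstacle, because \eqref{eq:comparison_assumption} controls means of bounded functions, not sample variances. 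The plan is to write $S_j^2 = \frac{k_n}{k_n-1}\bigl(\overline{f_n^2}_j - \bar f_j'^2\bigr)$. Applying \eqref{eq:comparison_assumption} to the bounded function $f_n^2$ (with $\norm{f_n^2}_\infty\le1$) gives $\overline{f_n^2}_j\to\nu_n(f_n^2)$ in $L^1$, and $\bar f_j'^2 \to \nu_n(f_n)^2$ by boundedness. Hence $S_j^2-\Var_{\nu_n}(f_n)\to0$ in probability, and so $\overline{S^2}(f_n)\ge\sigma_*^2/2$ with high probability. Finally, intersect these high-probability events with $G_n$ to conclude.
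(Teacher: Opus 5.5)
Your proposal is correct and follows essentially the paper's route. The common-proposal, common-uniform coupling on the event that no draw lands in $B_n$ is exactly the content of Theorem~\ref{theorem:general_failure}. Applying \eqref{eq:comparison_assumption} to both $f_n$ and $f_n^2$, so that $\overline{B}(f_n)/k_n \to 0$ and $\overline{S^2}(f_n) \ge \sigma_*^2/2$ with high probability, is how the paper shows $\Prob_{\K_n}(\hat{R}(f_n) \le \tau) \to 1$; you simply inline the coupling instead of invoking the general transfer bound, and you obtain the strict bound $\min_j |\bar f_j - \Pi_n(f_n)| > \delta$ over all $J$ chains directly.
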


We first prove a more general result.
Let $\Q$ be a proposal measure and $\Pi$ the target measure and $\nu$ a comparison measure.
Define the weights $w = d\Pi / d\Q$ and $v = d\nu / d\Q$.
Run $J \in \Z_+ $ independent chains, each initialized from $\Q$, with fixed transition counts $t_1, \ldots, t_J \in\mathbb \Z_+$.
Let $\e > 0$ and let $\hat{D} \in \{0, 1\}$ be a diagnostic from the entire samples of $J$ Markov chains of length $t_{j} + 1$.
The number of independent raw draws from $\Q$, counting initializations, is
\[
M = \sum_{ j = 1 }^J ( t_j + 1 ).
\]
For $1 \le k \le t_j + 1$, set
\[
A_{j,k}(f) = 
\frac{1}{k} \sum_{ i = 0 }^{ k - 1 } f( X_{j, i} ).
\]
For a specified integrable quantity $f : \X \to \R$ and tolerance $\delta > 0$,
\[
\{ \hat{D} = 1 , | A_{1,k}(f) - \Pi(f) | > \delta \}
\]
is the set of false convergence validation.
Let $\P$ be the IMH Markov kernel with target $\Pi$ and $\K$ with target $\nu$.
Let $\Prob_\P$ denote the probability with respect to $(X_t)_{t = 0}^{\infty}$ generated from the kernel $\P$ and similarly define $\Prob_\K$ using the kernel $\K$.

\begin{theorem}
\label{theorem:general_failure}
Let $v$ be a comparison weight and $w$ the weight of interest, with
$w \propto v$ on $B^c$ for some Borel set $B \subseteq \X$ with $\Pi(B) > 0$.
Define
$
\rho = 1 - \{ 1 - \mathcal{Q}(B) \}^{M}.
$
Let $f : \X \to \R$ be Borel measurable with $\int |f| d\Pi, \int |f| d\nu < \infty$.
Let
$
\Delta = | \Pi(f) - \nu(f) |.
$
For every $0 < \delta < \Delta$,
\begin{align*}
&\Prob_{\P}\left( \hat{D} = 1, \min_{j} | A_{j, t_j + 1}(f)
- \Pi(f) | \ge \Delta - \delta \right)
\\
&\ge \Prob_{\K}( \hat{D} = 1 ) - \Prob_{\K}( \max_{j} | A_{j,  t_j + 1}(f) -  \nu(f) | > \delta ) - \rho.
\end{align*}
\end{theorem}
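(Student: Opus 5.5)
The plan is to couple the two IMH processes so that they share all proposal draws and acceptance uniforms, and to show they coincide unless some proposal ever lands in $B$. Concretely, I would realize both $\P$-chains and $\K$-chains on one probability space. For each chain $j$, I take i.i.d. proposals $Y_{j,0}, Y_{j,1}, \ldots, Y_{j,t_j} \sim \Q$ and independent uniforms $U_{j,1}, \ldots, U_{j,t_j}$. Both chains start at $X_{j,0} = X'_{j,0} = Y_{j,0}$. At step $i$, the $\P$-chain moves to $Y_{j,i}$ iff $U_{j,i} \le \alpha_w(X_{j,i-1}, Y_{j,i})$, and the $\K$-chain moves iff $U_{j,i} \le \alpha_v(X'_{j,i-1}, Y_{j,i})$. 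Any extra randomness used by $\hat D$ is shared as well. This gives the correct law of each process, the $J$ independent chains under $\Prob_\P$ and under $\Prob_\K$ respectively. In total, $M = \sum_j (t_j+1)$ raw draws from $\Q$ are used.

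\textbf{Key step.} Let $G$ be the event that no $Y_{j,i}$ lies in $B$. Since the draws are i.i.d., $\Prob(G^c) = 1 - (1-\Q(B))^M = \rho$. On $G$, every state visited by either chain lies in $B^c$, because states are always previous proposals or the initial draw. Write $w = c\,v$ on $B^c$ with $c>0$; the constant is positive because $\Pi(B^c)>0$ is needed for the proportionality to be meaningful, and the degenerate cases can be handled separately. Then for $x,y \in B^c$ the ratio $w(y)/w(x) = v(y)/v(x)$, so $\alpha_w = \alpha_v$ there, including the convention $\alpha=1$ when the weight at $x$ vanishes, since $w(x)=0$ iff $v(x)=0$ on $B^c$. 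By induction on $i$, on $G$ we get $X_{j,i} = X'_{j,i}$ for all $j,i$. Hence all sample paths agree, and so do the diagnostic $\hat D$ and the averages $A_{j,t_j+1}(f)$.

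\textbf{Conclusion.} Let $E' = \{\hat D' = 1,\ \max_j |A'_{j,t_j+1}(f) - \nu(f)| \le \delta\}$ for the $\K$-chains. On $E'$, the triangle inequality gives $|A'_j - \Pi(f)| \ge |\Pi(f)-\nu(f)| - |A'_j - \nu(f)| \ge \Delta - \delta$ for every $j$. On $E' \cap G$, the $\P$-quantities equal the $\K$-quantities, so the target event for $\P$ holds. Therefore
\[
\Prob_\P(\cdot) \ge \Prob(E'\cap G) \ge \Prob(E') - \rho \ge \Prob_\K(\hat D = 1) - \Prob_\K\Bigl(\max_j |A'_j - \nu(f)| > \delta\Bigr) - \rho .
\]

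\textbf{Main obstacle.} The only delicate point is the key step: verifying that the acceptance functions agree on $B^c \times B^c$, including the zero-weight convention and the choice of versions of the Radon–Nikodym derivatives. Since the weights are defined everywhere, I would fix versions for which the proportionality holds pointwise on $B^c$. The rest is bookkeeping.
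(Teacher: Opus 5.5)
Your proposal is correct and follows essentially the same route as the paper: the shared-proposal, shared-uniform coupling, and the no-hit event of probability $1-\rho$ on which the two sets of chains coincide. It then uses the triangle inequality on the good event and the same union bound, and your explicit check that $\alpha_w=\alpha_v$ on $B^c\times B^c$ fills in a step the paper only asserts.

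One small correction to your aside: positivity of the constant in $w\propto v$ must be read as part of the hypothesis rather than derived from $\Pi(B^c)>0$. If $w=0$ on $B^c$ while $v\neq 0$ there, the $\P$-chains accept every proposal on the no-hit event while the $\K$-chains need not, and the inequality can genuinely fail. So there is no degenerate case to handle separately.
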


\begin{proof}[Proof of Theorem~\ref{theorem:general_failure}]
For each $j$, let $Y_{j, 0},\ldots,Y_{j, t_j}$ be independent samples from
$\mathcal Q$ and let $U_{j, k}$ be independent uniform random variables. 
Initialize at $Y_{j,0}$, and use the same proposal $Y_{j,i}$ and uniform for update $i$.
Denote the coupling of $(X_{j, k})$ with target $\Pi$ and $(\tilde{X}_{j, k})$ with target $\nu$ by $\Prob_{2}$.
For $1 \le k \le t_j + 1$, define
\begin{align*}
&A_{j,k}(f) = 
\frac{1}{k} \sum_{ i = 0 }^{ k - 1 } f( X_{j, i} )
&\tilde{A}_{j,k}(f) = 
\frac{1}{k} \sum_{ i = 0 }^{ k - 1 } f( \tilde{X}_{j, i} ).
\end{align*}
Let $\tilde{D}$ denote the convergence diagnostic computed only from the process $(\tilde{X}_{j, k})$ and define the sets
\begin{align*}
&E = \{ \tilde{D} = 1 \}
&F = \left\{ \max_{j} | \tilde{A}_{j, t_j + 1}(f) - \nu(f) | \le \delta \right\}.
\end{align*}
Define the set
$
H = \{ Y_{j,i} \notin B \text{ for all } j \text{ and } 0 \le i \le t_j \}.
$
By construction
$
\Prob_{2}(H) = \{ 1-\mathcal \Q(B) \}^{M} = 1 - \rho.
$

On $H$, both chains initialize the same and are the same for all $j$,
\begin{align*}
&\hat{D} = \tilde{D}
&A_{j, t_j + 1}(f) = \tilde{A}_{j, t_j + 1}(f).
\end{align*}

On $E \cap F \cap H$, for all $j$,
\[
|A_{j, t_j + 1}(f) - \Pi(f)|
= |\tilde{A}_{j, t_j + 1}(f) - \Pi(f)|
 \ge | \nu(f) - \Pi(f) | - | \tilde{A}_{j, t_j + 1}(f) - \nu(f) |
 \ge \Delta - \delta.
\]
We have the lower bound
\begin{align*}
&\Prob_{\P}\left( \hat{D} = 1,\ \min_{j} |A_{j, t_j + 1}(f) - \Pi(f) |\ge \Delta - \delta \right)
\ge\Prob_{2}( E \cap F \cap H)
\\
&\ge \Prob_{2}(E) - \Prob_{2}\left( E \cap(F^c \cup H^c) \right)
\ge \Prob_{2}(E) - \Prob_{\K}(F^c)- \Prob_{2}(H^c)
\\
&= \Prob_{\K}( \tilde{D} = 1 ) -  \Prob_{\K}\left( \max_{j} | \tilde{A}_{j, t_j + 1}(f) - \nu(f) |  > \delta \right) - \rho.
\end{align*}
\end{proof}

\begin{proof}[Proof of Theorem~\ref{theorem:Rhat_failure}]
Let $G_n$ be the event that both errors are at most $\eta > 0$ in every chain:
\[
G_n = \bigcap_{ j = 1 }^J
\left\{ | A_{j, k_n}(f_n) - \nu_n(f_n) | \le \eta, | A_{j, k_n}(f_n^2) - \nu_n(f_n^2) | \le \eta \right\}.
\]
Markov's inequality gives
\[
\Prob_{\K_n}( G_n^c )
\le \frac{2 J R_{k_n}(\Q_n, \nu_n)}{\eta}
\to 0.
\]

On $G_n$, since $|A_{j, k_n}(f_n)|, | \nu_{n}(f_n) | \le 1$,
\[
 \left| A_{j, k_n}(f_n)^2 - \nu_{n}(f_n)^2 \right|
 \le 2 | A_{j, k_n}(f_n) - \nu_{n}(f_n) | 
 \le 2 \eta.
\]
On $G_n$, then we can choose $\eta > 0$ small enough depending on $\sigma^2_*$ so that
\[
A_{j, k_n}(f_n^2) - ( A_{j, k_n}(f_n))^2
\ge \nu_n(f_n^2) - \nu_{n}(f_n)^2 - 3\eta
= \Var_{\nu_n}(f_n) - 3 \eta
\ge \sigma_*^2 / 2.
\]
On $G_n$,
$
\overline{B}_n(f_n) / k_n
\le \frac{J}{J - 1} \eta^2.
$
We can choose $\eta$ in terms of $\tau$ so that we have the upper bound
\begin{align*}
\hat{R}(f_n)^2
= 1 - \frac{1}{k_n} + \frac{\overline{B}_n(f_n)}{k_n} \frac{1}{\overline{S^2}_n(f_n)}
\le 1 - \frac{1}{k_n} + \frac{\overline{B}_n(f_n)}{k_n} \frac{2}{\sigma^2_*}
\le \tau^2.
\end{align*}
For sufficiently large $n \in \Z_+$, then
\[
\Prob_{\K_n}\left( \hat{R}(f_n) \le \tau \right) 
\ge \Prob_{\K_n}( G_n )
\ge 1 - \e.
\]
For any $\e \in (0, 1)$ and $0 < \delta < \Delta$ and $j \in \{1, \ldots, J \}$, for $n \in \Z_+$ sufficiently large, Theorem~\ref{theorem:general_failure} gives
\begin{align*}
\Prob_{\P_n}\left( \hat{R}(f_n) \le \tau, | A_{j, k_n}(f_n)
- \Pi_n(f_n) | \ge \Delta - \delta \right)
&\ge \Prob_{\K_n}\left( \hat{R}(f_n) \le \tau \right) - 2 \e
\\
&\ge 1 - 3 \e.
\end{align*}
The conclusion follows since $\e$ is arbitrary.
\end{proof}

\section{Artificial Intelligence Usage Statement}

This manuscript is written by the author in collaboration with GPT Sol 5.6 and GPT Astra through the Codex harness as a tool to develop the technical aspects of the proofs under direction from the author. In particular, the research directions and the main proof techniques are mostly contributions of the author or built heavily upon previous literature. The manuscript exposition is almost entirely contributed by the author aside from proofreading and assisting in Tikz code generation.

\appendix

\section{Proof of Theorem~\ref{theorem:laplace_cutoff}}

We first restrict the analysis to a certain set based on the assumptions on the data.
Define the arithmetic mean of the Hessian of the negative log likelihood as
\begin{align*}
\widehat{J}_N(\beta) &= \frac1N\sum_{i=1}^N \sigma'(x_i^T\beta)x_ix_i^T.
\end{align*}
Let $X_N=(x_1,\ldots,x_N)^T$ be the design matrix with rows $x_i^T$,
and let $c>0$ be the constant supplied by
Lemma~\ref{lemma:laplace_curvature}. 
Let $\norm{}_{\mathrm{op}}$ denote the operator norm.
Define the set
\begin{align}
E_N
&=\left\{\norm{X_N^TX_N/N}_{\mathrm{op}}\le2, \norm{\nabla U_N(\beta^*_{d})}\le cN/4, \inf_{\norm\beta\le B+1}
\lambda_{\min}\!\left(\widehat{J}_N(\beta)\right)\ge c \right\}.
\label{eq:E_N}
\end{align}
We will show that $\Prob_*(E_N) \to 1$.

The assumptions on the features give $\E_*(x_ix_i^T)=I_d$ and for every $h=(h_1,\ldots,h_d)^T$ with $\norm{h} = 1$, the centered projection
$x_i^Th-h_1$ has uniformly bounded sub-Gaussian norm by
\citet[Proposition~2.6.1]{vershynin:2018}.
A deterministic intercept $h_1$, with $|h_1|\le1$,
preserves this property.
By \citet[Theorem~5.39]{vershynin:2012}, since $d/N\to0$, there is a constant $b>0$ such that, for all sufficiently large $N \in \Z_+$,
\[
\Prob_*\!\left(\norm{ X_N^T X_N / N }_{\mathrm{op}}>2\right)
\le2e^{-bN}\to 0.
\]

By definition, the gradient
\[
\nabla U_N(\beta^*_{d})=\sum_{i=1}^N
\bigl(\sigma(x_i^T\beta^*_{d})-Y_i\bigr)x_i
+\tau^{-2}\beta^*_{d}.
\]
Using $\E_*\left(\norm{x_i}^2\right) = d$ and $\sigma'\le1/4$ gives
\[
\E_*\left( \norm{\nabla U_N(\beta^*_{d})}^2 \right)
\le\frac{Nd}{4}+\tau^{-4}B^2.
\]
For any fixed $c>0$, Markov's inequality implies
\[
\Prob_*\!\left(\norm{\nabla U_N(\beta^*_{d})}>cN/4\right)
\le\frac{16}{c^2N^2}\left(\frac{Nd}{4}+\tau^{-4}B^2\right)
\to 0.
\]
The following lemma handles the more difficult set to control.

\begin{lemma}
\label{lemma:laplace_curvature}
Under the assumptions on the data $(x_i, Y_i)_{i = 1}^N$ and $d/N\to0$, there is a constant
$c>0$, depending only on $B$ and $K$, such that

\begin{align*}
\Prob_*\left( \inf_{\norm\beta\le B+1}
\lambda_{\min}\!\left(\widehat{J}_N(\beta)\right)\ge c\right) \to 1.
\end{align*}
\end{lemma}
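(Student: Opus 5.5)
The approach is to reduce the uniform curvature bound to a lower bound on a truncated empirical second moment, and then control that second moment uniformly over directions. Since $\sigma'(t)=\sigma(t)(1-\sigma(t))$ is positive, even, and decreasing in $|t|$, for every $M>0$ and every $\norm{\beta}\le B+1$ we have $\sigma'(x_i^T\beta)\ge \sigma'(M)\,\mathbf{1}\{|x_i^T\beta|\le M\}$. Therefore, for a unit vector $h$,
\[
h^T\widehat J_N(\beta)h \ge \sigma'(M)\,\frac1N\sum_{i=1}^N (x_i^Th)^2\,\mathbf 1\{|x_i^T\beta|\le M\}.
\]
It suffices to show that, with probability tending to one, the right-hand average is bounded below by a constant uniformly over unit $h$ and $\norm\beta\le B+1$. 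The constant $c$ is then $\sigma'(M)$ times that lower bound, and it depends only on $B$ and $K$ once $M$ is chosen in terms of $B$ and $K$.

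\textbf{Population bound.} Fix $h$ and $\beta$ and write $x=x_i$. By Cauchy--Schwarz,
\[
\E_*\bigl[(x^Th)^2\mathbf 1\{|x^T\beta|>M\}\bigr]\le \bigl(\E_*(x^Th)^4\bigr)^{1/2}\,\Prob_*(|x^T\beta|>M)^{1/2}.
\]
The sub-Gaussian property quoted before the lemma (Vershynin, Prop.~2.6.1), together with the bounded intercept, gives $\E_*(x^Th)^4\le C_K$ uniformly over unit $h$. It also gives $\Prob_*(|x^T\beta|>M)\le 2\exp(-M^2/(C_K(B+1)^2))$, since $x^T\beta$ is $\beta_1$ plus a sub-Gaussian term with variance proxy of order $K^2\norm\beta^2$. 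Because $\E_*(x^Th)^2=1$, choosing $M=M(B,K)$ large makes the truncated population mean at least $3/4$.

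\textbf{Uniformity and the main obstacle.} The hard step is making this lower bound hold uniformly over the $2d$-dimensional parameter set $(h,\beta)$ when only $d/N\to0$ is assumed. The indicator is discontinuous, so a direct net argument fails. I would first replace it by a Lipschitz cutoff $\phi_M(t)$ with $\mathbf 1\{|t|\le M\}\ge\phi_M(t)\ge\mathbf 1\{|t|\le M/2\}$, and re-choose $M$ so the population bound above still holds with $\phi_M$ in place of the indicator.

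The quantity to control is then the lower tail of $\frac1N\sum_i (x_i^Th)^2\phi_M(x_i^T\beta)$. The plan has four parts.
\begin{enumerate}
\item Truncate $(x_i^Th)^2$ at a level $L$, giving summands bounded by $L$. The expectation lost by truncation is at most $C_K/L$ by the fourth-moment bound, so it is small for $L=L(K)$.
\item For fixed $(h,\beta)$, apply Hoeffding's inequality: the deviation probability is at most $e^{-c'N}$ for a $c'$ depending on $L$.
\item Take a union bound over an $\epsilon$-net of $S^{d-1}\times\{\norm\beta\le B+1\}$. The net has cardinality $(C/\epsilon)^{2d}$, which is $e^{o(N)}$ because $d/N\to0$.
\item Pass from the net to all $(h,\beta)$ on the event $\norm{X_N^TX_N/N}_{\mathrm{op}}\le2$, which has probability tending to one by Vershynin's Theorem~5.39 as stated above. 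On this event, the map $(h,\beta)\mapsto\frac1N\sum_i \min((x_i^Th)^2,L)\,\phi_M(x_i^T\beta)$ changes by at most $C(L,M)\,\epsilon$ under perturbations of size $\epsilon$. This follows from Cauchy--Schwarz applied to $\frac1N\sum_i|x_i^T(h-h')|\,|x_i^Th|$ and to the analogous terms in $\beta$, each bounded using the operator norm.
\end{enumerate}

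Choosing $\epsilon$ small in terms of $L$ and $M$ yields a uniform lower bound of $1/2$ on the average with probability tending to one. This gives the lemma with $c=\sigma'(M)/2$. The delicate part is the Lipschitz control in $\beta$ through $\phi_M$, combined with the truncation in $h$. If this proves awkward, an alternative is to apply a one-sided small-ball (Mendelson-type) lower bound for nonnegative empirical processes.
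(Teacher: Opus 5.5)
Your proposal is correct, but it takes a different route from the paper.

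\textbf{What the paper does.} The paper applies Cauchy--Schwarz at the very first step, $\frac1N\sum_i\sigma'(x_i^T\beta)(x_i^Th)^2\ge\sigma'(L)\bigl[\frac1N\sum_i|x_i^Th|\mathbf 1\{|x_i^T\beta|\le L\}\bigr]^2$, which reduces everything to first absolute moments. This gains two things.
\begin{enumerate}
\item The truncation error $\frac1N\sum_i|x_i^Th|\mathbf 1\{|x_i^T\beta|>L\}$ is bounded \emph{deterministically} by $2(B+1)/L$ on the event $\norm{X_N^TX_N/N}_{\mathrm{op}}\le 2$. This uses $\mathbf 1\{|t|>L\}\le t^2/L^2$ and Cauchy--Schwarz, so uniformity in $\beta$ costs nothing.
\item The remaining quantity $\inf_{\norm h=1}\frac1N\sum_i|x_i^Th|$ is handled by the population bound $\E_*|x_i^Th|\ge M^{-1/2}$ (H\"older with the fourth moment). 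Symmetrization then gives $\E_*\sup_{\norm h\le1}|\frac1N\sum_i(|x_i^Th|-\E_*|x_i^Th|)|\le 4\sqrt{d/N}$ in one line, with no nets, Lipschitz cutoff, truncation of $(x_i^Th)^2$, or Hoeffding.
\end{enumerate}

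\textbf{Why your route needs the net.} Because you keep the second moment $(x_i^Th)^2$, the analogous $\beta$-truncation error would need empirical fourth moments controlled uniformly in $h$. The operator-norm event does not provide this when only $d/N\to0$, which is why you need the joint $\e$-net over $(h,\beta)$.

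\textbf{Your bookkeeping holds.} $\min(s^2,L)$ is $2\sqrt L$-Lipschitz, $\phi_M$ is bounded and $(2/M)$-Lipschitz, and $\frac1N\sum_i|x_i^Tu|\le\sqrt2\norm u$ on the operator-norm event. The net has $e^{O(d)}=e^{o(N)}$ points against a Hoeffding bound $e^{-c'N}$. Hoeffding does not need identically distributed rows, and your population bounds are uniform in $i$. For the tail of $x^T\beta$, you must take $M$ larger than the intercept $|\beta_1|\le B+1$. Alternatively, Chebyshev with $\E_*(x^T\beta)^2=\norm\beta^2$ already suffices.

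\textbf{Trade-off.} Your extra machinery buys an exponentially small failure probability, $e^{-c'N+O(d)}+2e^{-bN}$, versus the paper's $8\sqrt M\sqrt{d/N}+2e^{-bN}$, and a cleaner constant $c=\sigma'(M)/2$. The paper's argument is shorter and is essentially the first-moment version of the small-ball method you mention as a fallback.
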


\begin{proof}
Let $\beta \in \R^d$.
The smallest eigenvalue satisfies
\[
\lambda_{\min}\!\left(\widehat J_N(\beta)\right)
=\inf_{\norm h=1}h^T\widehat J_N(\beta)h.
\]
For any $L > 0$, $\sigma'(t)\ge\sigma'(L)$ for $|t|\le L$.
For any $\norm{h} = 1$, the Cauchy-Schwarz inequality gives
\begin{align*}
h^T\widehat J_N(\beta)h
= \frac1N\sum_{i=1}^N\sigma'(x_i^T\beta)(x_i^Th)^2 
&\ge\sigma'(L)\left[
\frac1N\sum_{i=1}^N|x_i^Th|
\mathbf1_{\{|x_i^T\beta|\le L\}}\right]^2
\\
&\ge \sigma'(L)\left[
\frac1N\sum_{i=1}^N|x_i^Th|
-\frac1N\sum_{i=1}^N|x_i^Th|\mathbf1_{\{|x_i^T\beta|>L\}}
\right]^2.
\end{align*}

By \citet[Theorem~5.39]{vershynin:2012}, there is a constant $b>0$ such that, for all
sufficiently large $N$,
\[
\Prob_*\!\left(\norm{ X_N^T X_N / N }_{\mathrm{op}}>2\right)
\le 2 e^{-bN}.
\]
For every $\norm\beta\le B + 1$ and $\norm h=1$, the Cauchy-Schwarz inequality and
$\mathbf1_{\{|x_i^T\beta|>L\}}\le(x_i^T\beta)^2/L^2$ imply on $\{\norm{X_N^TX_N/N}_{\mathrm{op}}\le2\}$,
\begin{align*}
\frac1N\sum_{i=1}^N|x_i^Th|
\mathbf1_{\{|x_i^T\beta|>L\}}
&\le\left(\frac1N\sum_{i=1}^N(x_i^Th)^2\right)^{1/2}
\left(\frac1N\sum_{i=1}^N\mathbf1_{\{|x_i^T\beta|>L\}}\right)^{1/2}\\
&\le\frac1L\left(\frac1N\sum_{i=1}^N(x_i^Th)^2\right)^{1/2}
\left(\frac1N\sum_{i=1}^N(x_i^T\beta)^2\right)^{1/2}\\
&=\frac{\sqrt{(h^TX_N^TX_Nh)(\beta^TX_N^TX_N\beta)}}{NL}
\le\frac{2(B + 1)}{L}.
\end{align*}

By \citet[Proposition~2.5.2]{vershynin:2018}, there is a constant $M > 0$,
depending only on $K$, such that
\begin{align}
\sup_{1\le i\le N}\sup_{\norm h=1}\E_*\left(|x_i^Th|^4\right)\le M.
\label{eq:x_moment_ub}
\end{align}
For every $\norm{h} = 1$, H\"older's inequality with exponents $3/2$ and $3$ gives
\begin{align*}
\E_*(|x_i^Th|^2)
&= \E_*(|x_i^Th|^{2/3}|x_i^Th|^{4/3})\\
&\le \E_*( |x_i^Th| )^{2/3} \E_*( |x_i^Th|^{4} )^{1/3}.
\end{align*}
Then
\[
\E_*(|x_i^Th|)
\ge\frac{\{\E_*\left((x_i^Th)^2\right)\}^{3/2}}
{\{\E_*\left(|x_i^Th|^4\right)\}^{1/2}}
\ge\frac1{\sqrt{M}}.
\]

Taking the infimum
\begin{align*}
\inf_{\norm h=1} \frac1N\sum_{i=1}^N|x_i^Th|
&\ge \inf_{\norm h=1} \frac1N\sum_{i=1}^N\E_*(|x_i^Th|)
+ \inf_{\norm h=1} \left\{ \frac1N\sum_{i=1}^N\bigl(|x_i^Th|-\E_*(|x_i^Th|)\bigr) \right\}
\\
&\ge \frac1{\sqrt M}
- \sup_{\norm h\le1} \left|\frac1N\sum_{i=1}^N
\bigl(|x_i^Th|-\E_*(|x_i^Th|)\bigr)\right|.
\end{align*}

Let $\e = (\e_1,\ldots,\e_N)$ be a vector of independent Rademacher random variables,
independent of the data, each taking the values $1$ and $-1$ with
probability $1/2$. Write $\E_{\e}$ for expectation over these random variables.
Symmetrization and the Cauchy-Schwarz inequality \citep[see Exercises~6.4.4(a) and~6.7.7]{vershynin:2018} yield
\begin{align*}
\E_*\left(\sup_{\norm h\le1}\left|\frac1N\sum_{i=1}^N
\bigl(|x_i^Th|-\E_*(|x_i^Th|)\bigr)\right|\right)
&\le\frac4N\E_*\left(\E_\e\left(
\left\lVert\sum_{i=1}^N\e_i x_i\right\rVert\right)\right)\\
&\le\frac4N\left(\E_*\left(\E_\e\left(
\left\lVert\sum_{i=1}^N\e_i x_i\right\rVert^2\right)\right)\right)^{1/2}\\
&=\frac4N\left(\sum_{i=1}^N \E_*\left(\norm{x_i}^2 \right)\right)^{1/2}\\
&= 4\sqrt{\frac dN}.
\end{align*}
The first equality uses $\E_\e(\e_i\e_j)=0$ for $i\ne j$ and
$\E_\e(\e_i^2)=1$; the last uses $\E_*(\norm{x_i}^2)=d$.
By Markov's inequality,
\begin{align*}
\Prob_*\left(\inf_{\norm h=1}\frac1N\sum_{i=1}^N|x_i^Th|
<\frac1{2\sqrt M}\right)
&\le \Prob_*\left(\sup_{\norm h\le1}
\left|\frac1N\sum_{i=1}^N\bigl(|x_i^Th|-\E_*(|x_i^Th|)\bigr)\right|
>\frac1{2\sqrt M}\right)
\\
&\le 8\sqrt{M}\sqrt{\frac dN}.
\end{align*}
Combining these,  for any $\e \in (0, 1)$, choose $N$ large enough so that

\begin{align*}
&\Prob_*\left( \norm{X_N^TX_N/N}_{\mathrm{op}}\le2,
\inf_{\norm h=1}\frac1N\sum_{i=1}^N|x_i^Th|
\ge\frac1{2\sqrt{M}}
\right)\\
&\qquad\ge 1-2e^{-bN}-8\sqrt{M}\sqrt{d/N}
\ge 1-\e.
\end{align*}

Choose $L=8 (B + 1) \sqrt M$ to get with probability at least $1 - \e$,
simultaneously for every $\norm\beta\le B + 1$ and $\norm h=1$,
\begin{align*}
\frac1N\sum_{i=1}^N|x_i^Th|\mathbf1_{\{|x_i^T\beta|\le L\}}
&=\frac1N\sum_{i=1}^N|x_i^Th|
-\frac1N\sum_{i=1}^N|x_i^Th|\mathbf1_{\{|x_i^T\beta|>L\}}\\
&\ge\frac1{2\sqrt M}-\frac{2(B+1)}{L}
=\frac1{4\sqrt M}.
\end{align*}

Choosing $c=\sigma'(8(B+1)\sqrt M)/(16M)>0$, then gives, on the same event,
\[
\Prob_*\left( \inf_{\norm\beta\le B+1} \inf_{\norm{h} = 1} h^T\widehat J_N(\beta)h
\ge\sigma'(L)\left(\frac1{4\sqrt M}\right)^2=c
\right) \ge 1 - \e.
\]
Since $\e\in(0,1)$ is arbitrary, the stated conclusion follows.

\end{proof}

By Lemma~\ref{lemma:laplace_curvature} and the previous bounds, if $d/N\to0$,
\begin{align*}
\Prob_*(E_N^c)
&\le\Prob_*\!\left(\norm{X_N^TX_N/N}_{\mathrm{op}}>2\right)
+\Prob_*\!\left(\norm{\nabla U_N(\beta^*_{d})}>cN/4\right)\\
&\quad+\Prob_*\!\left(\inf_{\norm\beta\le B+1}
\lambda_{\min}\!\left(\widehat J_N(\beta)\right)<c\right)
\to0
\end{align*}
and so $\Prob_*(E_N) \to 1$.
We get the following consequences on $E_N$. For every $\norm\beta\le B+1$,
\[
\nabla^2U_N(\beta)
=N\widehat J_N(\beta)+\tau^{-2}I_d\succeq cNI_d.
\]
For $\norm{\beta-\beta^*_{d}}=1/2$, integrating this Hessian bound gives
\[
(\beta-\beta^*_{d})^T\nabla U_N(\beta)
\ge\frac{cN}{4}-\frac12\norm{\nabla U_N(\beta^*_{d})}
\ge\frac{cN}{8}>0.
\]
Convexity therefore places the minimizer inside this sphere, and hence
\begin{align}
\norm{\widehat\beta_N-\beta^*_{d}}
&\le\frac12,
\qquad cNI_d\preceq\widehat H_N\preceq CNI_d.
\label{eq:dense_logistic_mode}
\end{align}
The upper Hessian bound follows from
$\nabla^2U_N\preceq X_N^TX_N/4+\tau^{-2}I_d$.

We use $C>0$ for fixed constants that may change between different
inequalities.
Define $s_{N,i}=\widehat H_N^{-1/2}x_i$ and let $S_N$ be the matrix with rows $s_{N,i}^T$.
Then the following bounds hold on $E_N$:
\begin{align}
\norm{S_N^TS_N}_{\mathrm{op}}&\le C,\qquad
\max_{1\le i\le N}\norm{s_{N,i}}^2\le C\frac dN,
\nonumber\\
\sum_{i=1}^N\norm{s_{N,i}}^2&\le Cd,\qquad
\norm{v_N}^2\le C\frac{d^2}{N}.
\label{eq:dense_logistic_bounds}
\end{align}
Indeed, $S_N^TS_N=\widehat H_N^{-1/2}X_N^TX_N\widehat H_N^{-1/2}$
gives the first bound, and
$\norm{x_i}^2\le(1+K^2)d$ gives the second.
Taking the trace gives the third.
We have the identity
\[
v_N = \frac12 \sum_{i=1}^N
\sigma''(x_i^T\widehat\beta_N) \norm{s_{N,i}}^2 s_{N,i}.
\]
For the last, the definition of $v_N$ and boundedness of $\sigma''$ give
\[
\norm{v_N}^2
=\frac14\left\lVert S_N^T
\bigl(\sigma''(x_i^T\widehat\beta_N)\norm{s_{N,i}}^2\bigr)_{i=1}^N
\right\rVert^2
\le C\sum_{i=1}^N\norm{s_{N,i}}^4
\le C\frac{d^2}{N}.
\]

Define the importance weight $w_N = d\Pi_N / d\Q_N$.
Define the Gaussian measure
\begin{align}
\mathcal{G_N}
= N\left(\widehat{\beta}_N-  \widehat H_N^{-1/2}v_N,
          \widehat H_N^{-1}\right).
\label{eq:G_N}
\end{align}
Then we have the decomposition of the importance weight
\[
\log(w_N)
=\log\left( \frac{d\mathcal G_N}{d\Q_N} \right) + \log\left( \frac{d\Pi_N}{d\mathcal G_N} \right).
\]
We will prove in Lemma~\ref{lemma:laplace_tv} that $\norm{\Pi_N-\mathcal G_N}_{TV}\to0$ in $\Prob_*$ probability.
For $Z^0_N \sim N(0,I_d)$ conditional on the data, a comparison of Gaussian densities gives
\[
\log\left( \frac{d\mathcal G_N}{d\Q_N} \right)
\left( \widehat{\beta}_N + \widehat{H}_N^{-1/2}(Z^0_N - v_N) \right)
=\frac{\norm{v_N}^2}{2}-v_N^T Z^0_{N}.
\]
Thus, conditional on any data set with $\norm{v_N}>0$, under $\mathcal G_N$,
\[
\frac{\log\left(d\mathcal G_N/d\Q_N\right)-\norm{v_N}^2/2}
{\norm{v_N}}
\sim N(0, 1).
\]
Total variation convergence implies for every such data and every $s\in\R$,
\[
\left|\Pi_N\!\left(
\frac{\log\left(d\mathcal G_N/d\Q_N\right)-\norm{v_N}^2/2}
{\norm{v_N}}\le s\right)-\Phi(s)\right|
\le\norm{\Pi_N-\mathcal G_N}_{TV}.
\]
From any subsequence, there is a further subsequence along which
\[
\norm{\Pi_N-\mathcal G_N}_{TV}\to0,
\qquad\norm{v_N}\to\infty
\]
hold $\Prob_*$ almost surely.
For every $\e>0$,
\begin{align*}
&(1-e^{-\e})\,
\Pi_N\!\left(\left|\log\left( \frac{d\Pi_N}{d\mathcal G_N} \right) \right|>\e\right)\\
&\qquad\le\int_{\R^d}\left|1-\frac{d\mathcal G_N}{d\Pi_N}\right|\,d\Pi_N
=2\norm{\Pi_N-\mathcal G_N}_{TV}.
\end{align*}
This implies $\log\left(d\Pi_N/d\mathcal G_N\right)\to0$ in $\Pi_N$ probability holds $\Prob_*$ almost surely.

Combining this with the transferred Gaussian limit justifies the following limit
along the extracted subsequence.
Therefore, $\Prob_*$ almost surely, for $s\in\R$,
\[
\Pi_N\!\left(\frac{\log(w_N) - \norm{v_N}^2/2}{\norm{v_N}}\le s\right) \to \Phi(s).
\]
We verify the hypotheses of Lemma~\ref{lemma:general_cutoff_profile} along this further subsequence.
Choose $L_N=\exp(\sqrt{\norm{v_N}})$.
Since $\norm{v_N}\to\infty$, both $T_N(z)$ and $L_N$ tend to infinity,
and the integer part in the definition of $T_N(z)$ satisfies
\[
\log\left(T_N(z)\right)-\frac{\norm{v_N}^2}{2}-z\norm{v_N}\to0.
\]
Also, $\log(L_N)/\norm{v_N}=1/\sqrt{\norm{v_N}}\to0$ and
\[
\frac{\log\left(T_N(z)\right)\pm\log(L_N)-\norm{v_N}^2/2}
{\norm{v_N}}\to z.
\]
Continuity of the Gaussian distribution function $\Phi$ therefore gives
\begin{align*}
&\Pi_N\!\left(\frac{w_N}{T_N(z)}\le\frac1{L_N}\right)
\to\Phi(z),
&\Pi_N\!\left(\frac1{L_N}\le\frac{w_N}{T_N(z)}\le L_N\right)
\to0.
\end{align*}
Lemma~\ref{lemma:general_cutoff_profile} yields
\[
R_{T_N(z)}(\Q_N,\Pi_N)\to2(1-\Phi(z))
\]
along the almost surely convergent data subsequences.
Since every subsequence has such an almost surely convergent subsequence, convergence holds in $\Prob_*$ probability.
It remains to prove Lemma~\ref{lemma:laplace_tv}.

\begin{lemma}
\label{lemma:laplace_tv}
Under the assumptions on the data $(x_i, Y_i)_{i = 1}^N$ and assuming
$
d^{5/3} / N \to 0,
$
then $\norm{\Pi_N-\mathcal G_N}_{TV}\to0$ in $\Prob_*$ probability,
where $\mathcal{G}_N$ is defined by \eqref{eq:G_N}.
\end{lemma}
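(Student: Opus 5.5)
The plan is to work on the event $E_N$ from \eqref{eq:E_N}, which has $\Prob_*(E_N)\to1$. On $E_N$ the mode and curvature bounds \eqref{eq:dense_logistic_mode} and the bounds \eqref{eq:dense_logistic_bounds} hold. We then compare the posterior and $\mathcal G_N$ in the whitened coordinates $\beta=\widehat\beta_N+\widehat H_N^{-1/2}u$. Total variation is invariant under this affine change. In these coordinates $\mathcal G_N$ becomes $N(-v_N,I_d)$, and $\Pi_N$ has density proportional to $\exp(-\widetilde U_N(u))$ with $\widetilde U_N(u)=U_N(\widehat\beta_N+\widehat H_N^{-1/2}u)-U_N(\widehat\beta_N)$.

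Taylor expanding to fourth order and using $\nabla U_N(\widehat\beta_N)=0$ and $\nabla^2U_N(\widehat\beta_N)=\widehat H_N$ gives
\[
\widetilde U_N(u)=\tfrac12\norm{u}^2+\tfrac16\sum_{i=1}^N\sigma''(x_i^T\widehat\beta_N)(s_{N,i}^Tu)^3+\rho_N(u),
\qquad |\rho_N(u)|\le C\sum_{i=1}^N (s_{N,i}^Tu)^4 .
\]
This uses $|\sigma'''|\le C$, and the prior contributes only to the quadratic term.

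The key observation is that the cubic term splits into two pieces. One is a linear piece $\tfrac12\sum_i\sigma''_i\norm{s_{N,i}}^2 s_{N,i}^Tu=v_N^Tu$; the factor $3$ comes from the Gaussian expectation $\E[(a^Tu)^3\mid\text{linear part}]$, or equivalently from the Hermite decomposition $(a^Tu)^3=\mathrm{He}_3+3\norm a^2a^Tu$. The other is a centered third-Hermite remainder
\[
\kappa_N(u)=\tfrac16\sum_i\sigma''_i\bigl[(s_{N,i}^Tu)^3-3\norm{s_{N,i}}^2 s_{N,i}^Tu\bigr].
\]
Thus $\widetilde U_N(u)=\tfrac12\norm{u+v_N}^2-\tfrac12\norm{v_N}^2+\kappa_N(u)+\rho_N(u)$. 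After normalization, this means
\[
\frac{d\Pi_N}{d\mathcal G_N}(u)\propto \exp\bigl(-\kappa_N(u)-\rho_N(u)\bigr).
\]

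The remaining step is to show that $\kappa_N$ and $\rho_N$ are small in $L^2(\mathcal G_N)$, or at least in probability, uniformly enough. We then conclude using the elementary bound $\norm{\mu-\nu}_{TV}\le C\,\E_\nu|e^{g}-1|$, applied to the normalized density ratio. For the quartic remainder, under $u\sim N(-v_N,I_d)$ we have
\[
\E\sum_i(s_{N,i}^Tu)^4\le C\sum_i\norm{s_{N,i}}^4+C\sum_i(s_{N,i}^Tv_N)^4\le Cd^2/N+C\norm{v_N}^4\max_i\norm{s_{N,i}}^4 .
\]
Both terms are small as long as $\norm{v_N}^2\lesssim d^2/N$; by \eqref{eq:dense_logistic_bounds} and $d^{5/3}/N\to0$ the second is bounded by $d^6/N^4\to0$.

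For $\kappa_N$ we compute its variance under $N(0,I)$ by Hermite orthogonality:
\[
\Var\kappa_N\le C\sum_{i,j}(s_{N,i}^Ts_{N,j})^3 .
\]
We bound this by $\max_{i\ne j}|s_{N,i}^Ts_{N,j}|\cdot\sum_{i,j}(s_{N,i}^Ts_{N,j})^2$ plus the diagonal part. The diagonal part is $\sum_i\norm{s_{N,i}}^6\le Cd^3/N^2$. For the off-diagonal part, $\sum_{i,j}(s_{N,i}^Ts_{N,j})^2=\operatorname{tr}((S_N^TS_N)^2)\le Cd$. The off-diagonal maximum is where the exponent $5/3$ should enter. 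I will try to bound it by $C\sqrt{d\log N}/N$, using independence of the rows and sub-Gaussianity; this requires handling the dependence of $\widehat H_N$ on all rows, for instance through leave-one-out matrices or the crude bound $\widehat H_N\succeq cNI$. A Hölder interpolation $\sum|a_{ij}|^3\le(\sum a_{ij}^2)^{1/2}\,\cdot\,$(something) should give a bound of order $d^{5/2}/N^{3/2}$, which is $o(1)$ iff $d^{5/3}/N\to0$. The shift by $v_N$ then affects $\kappa_N$ only through lower-order terms bounded the same way.

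The main obstacle will be making the exponential control rigorous, not merely the $L^2$ control. Smallness of $\kappa_N+\rho_N$ in probability under $\mathcal G_N$ is not enough for total variation; we also need to rule out mass of $\Pi_N$ far out. For this I will use log-concavity: on $\{\norm{\widehat H_N^{1/2}(\beta-\widehat\beta_N)}\le r_N\}$ with $r_N\approx\sqrt d+\norm{v_N}+\text{slack}$, the local expansion is valid, because $\norm\beta\le B+1$ there by \eqref{eq:dense_logistic_mode} and the Hessian bounds. Outside this ball, strong convexity $\nabla^2U_N\succeq cN$ inside $\norm\beta\le B+1$, together with convexity, gives Gaussian-type tails for both measures. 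On the ball we then apply the standard argument
\[
\norm{\mu-\nu}_{TV}\le \E_\nu|e^{-g}/\E_\nu e^{-g}-1|
\]
with $g$ small in probability and $e^{-g}$ bounded on the truncation. Tightening the off-diagonal inner-product bound so that it yields exactly the $d^{5/3}/N$ rate is the delicate calculation I expect to spend the most effort on.
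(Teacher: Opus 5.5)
Your whitening and Hermite split giving $d\Pi_N/d\mathcal{G}_N\propto e^{-\kappa_N-\rho_N}$ are correct, but two steps fail as written.

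First, $\rho_N$ is not small. Under $N(-v_N,I_d)$ one has $\E\sum_i(s_{N,i}^Tu)^4\ge3\sum_i\norm{s_{N,i}}^4\ge3N^{-1}(\sum_i\norm{s_{N,i}}^2)^2\gtrsim d^2/N$. This is not an artifact of the absolute-value bound: the mean of $\rho_N$, about $\frac18\sum_i\sigma'''(x_i^T\widehat\beta_N)\norm{s_{N,i}}^4$, is genuinely of that order (e.g.\ for $\beta^*_d=0$, where $\sigma'''\approx\sigma'''(0)=-1/8$). The hypothesis $d^{5/3}/N\to0$ allows $d=N^{11/20}$, for which $d^2/N\to\infty$, so ``both terms are small'' silently assumes $d^2/N\to0$. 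The constant cancels in the normalization, so you must center. The centered part is then controlled by Gaussian Poincar\'e: $\Var_{\mathcal{G}_N}(\rho_N)\le\E_{\mathcal{G}_N}\norm{\nabla\rho_N}^2$, and $\nabla\rho_N$ is exactly the gradient Taylor remainder $r_{N,3}$, of size $O(d^3/N^2+(d/N)\norm{v_N}^4)$. Also, $\sum_i(s_{N,i}^Tv_N)^4\le\max_i(s_{N,i}^Tv_N)^2\sum_i(s_{N,i}^Tv_N)^2\le C(d/N)\norm{v_N}^4\le Cd^5/N^3$; your bound $\norm{v_N}^4\max_i\norm{s_{N,i}}^4$ drops a factor $N$. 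This term, together with the shift term $\frac12\sum_i\sigma''_i(s_{N,i}^Tv_N)^2s_{N,i}^TZ$ of $\kappa_N$ (where $\sigma''_i=\sigma''(x_i^T\widehat\beta_N)$), is where $d^{5/3}/N$ really enters. The cubic Hermite variance needs only $d^{3/2}/N\to0$, and you need no leave-one-out control of $\max_{i\ne j}|s_{N,i}^Ts_{N,j}|$. Instead, $S_NS_N^T\preceq(cN)^{-1}X_NX_N^T$ and the Schur product theorem give $\sum_{i,j}\sigma''_i\sigma''_j(s_{N,i}^Ts_{N,j})^3\le CN^{-3}\sum_{i,j}|x_i^Tx_j|^3$, whose $\E_*$-expectation is $O(d^{3/2}/N+d^3/N^2)$ by row independence.

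Second, and more seriously, the passage to total variation, which you flag as the main obstacle, is not closed. The bound $\E_\nu|e^{-g}/\E_\nu e^{-g}-1|$ with $g$ small in $\mathcal{G}_N$-probability gives TV convergence only if you also show $\E_{\mathcal{G}_N}e^{-g}\to1$ after centering, i.e.\ rule out posterior mass where $\mathcal{G}_N$ is small. $L^2$ information on $g$ cannot give this. The uniform bound on $e^{-g}$ over a ball of radius $r_N\approx\sqrt d+\norm{v_N}$ is not available either: there the natural estimates are only $|\kappa_N|\lesssim\sqrt{d/N}\,r_N^3\approx d^2/\sqrt N$ and $|\rho_N|\lesssim(d/N)r_N^4\approx d^3/N$, both divergent in the allowed regime.

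The missing idea is to avoid exponential moments with a functional inequality on the posterior side, which also disposes of the constant in $\rho_N$ automatically. In the paper's notation, $P_N,G_N$ are the whitened $\Pi_N,\mathcal{G}_N$ and $u_N$ is your $\widetilde U_N$. On $B_{r_N}$ with $r_N=\rho\sqrt N$ one has $\nabla^2u_N\succeq mI_d$, so the posterior restricted to this convex ball satisfies a log-Sobolev inequality with constant $1/m$. Hence $\mathrm{KL}(G_N|_{B_{r_N}}\,\|\,P_N|_{B_{r_N}})\le\{2m\,G_N(B_{r_N})\}^{-1}\E_{G_N}\norm{\nabla(\kappa_N+\rho_N)}^2$. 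Combined with Pinsker, this is Katsevich's Lemma~2.11 as used in the paper: $\norm{P_N-G_N}_{TV}\le P_N(B_{r_N}^c)+G_N(B_{r_N}^c)+\eta_N/(2\sqrt{mG_N(B_{r_N})})$. Here $\eta_N^2=\E_{G_N}\norm{\nabla u_N(Z_N)-Z_N-v_N}^2$ is precisely the $L^2(G_N)$ norm of $\nabla(\kappa_N+\rho_N)$. The posterior tail $P_N(B_{r_N}^c)\le d/(m\rho^2N)$ comes from $\int z^T\nabla u_N\,dP_N=d$ and convexity. In short, your $L^2$ computations survive once you differentiate them: control the score difference, not the log-density ratio.
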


\begin{proof}
As above, $C>0$ denotes a fixed constant that may change between
inequalities.
The assumption implies
$d^{3/2}/N\le d^{5/3}/N\to0$, and hence also $d/N\to0$.
Define $u_N$ by
\begin{align*}
u_N(z)&=U_N( \widehat\beta_N+\widehat H_N^{-1/2}z )-U_N(\widehat\beta_N).
\end{align*}
We have $u_N(0)=0$ and $\nabla u_N(0)=0$.
Let $P_N$ and $G_N$ be the images of $\Pi_N$ and $\mathcal G_N$
under the invertible affine change of variables
$\beta\mapsto\widehat H_N^{1/2}(\beta-\widehat\beta_N)$.
For every Borel set $A\subseteq\R^d$,
\[
P_N(A)=\Pi_N( \widehat\beta_N+\widehat H_N^{-1/2}A ),
\qquad G_N(A)=\mathcal G_N( \widehat\beta_N+\widehat H_N^{-1/2} A ).
\]
Then $P_N(dz)\propto e^{-u_N(z)}\,dz$ and $G_N=N(-v_N,I_d)$.
Choose $c>0$ to be the constant in Lemma~\ref{lemma:laplace_curvature}.
Choose a fixed $0<\rho\le\sqrt c/4$, set $r_N=\rho\sqrt N$, and write
$B_{r_N}=\{z:\norm z\le r_N\}$. Let $Z_N\sim G_N$ and define
\[
\eta_N^2=\E_{G_N}\left(\norm{\nabla u_N(Z_N)-Z_N-v_N}^2\right).
\]
On $E_N$, \eqref{eq:dense_logistic_mode} gives
$\norm{\widehat H_N^{-1/2}z}\le1/2$ whenever $\norm z\le2r_N$.
For $\norm z\le2r_N$,
\begin{align*}
\norm{\widehat\beta_N+\widehat H_N^{-1/2}z}
\le\norm{\beta^*_{d}}+\norm{\widehat\beta_N-\beta^*_{d}} + \norm{\widehat H_N^{-1/2}z}
\le B+\frac12+\frac12=B+1.
\end{align*}
For $\norm z\le2r_N$, $\nabla^2U_N\succeq cNI_d$ and
$\widehat H_N\preceq CNI_d$ and let $m = \frac{c}{C}$, which is a fixed constant.
Then
\begin{align*}
\nabla^2u_N(z)
=\widehat H_N^{-1/2}
\nabla^2U_N\!\left(\widehat\beta_N+\widehat H_N^{-1/2}z\right)
\widehat H_N^{-1/2}
\succeq cN\widehat H_N^{-1}
\succeq mI_d.
\end{align*}

Since $\nabla^2u_N\succeq mI_d$ on $B_{r_N}$ for a fixed $m>0$ independent of $N$, $d$, and the data, then \citet[Lemma~2.11]{katsevich:2025}, applied with potentials
$u_N(z)$ and $\norm{z+v_N}^2/2$, yields the upper bound
\begin{align}
\norm{\Pi_N-\mathcal G_N}_{TV}
=\norm{P_N-G_N}_{TV}
&\le P_N(B_{r_N}^c)+G_N(B_{r_N}^c)
     +\frac{\eta_N}{2\sqrt{mG_N(B_{r_N})}}.
\label{eq:tv_ub}
\end{align}
On $E_N$, we have the upper bound $\norm{v_N}^2\le Cd^2/N$ in
\eqref{eq:dense_logistic_bounds} and using Markov's
inequality gives
\[
G_N(B_{r_N}^c)
\le\frac{d+\norm{v_N}^2}{r_N^2}
\le\frac{C}{\rho^2}\left(\frac dN+\frac{d^2}{N^2}\right)
\to0.
\]
For each fixed data set, the Gaussian prior ensures Gaussian decay of
$e^{-u_N(z)}$, and $\nabla u_N(z)$ grows at most linearly. Integration
by parts therefore gives
\[
0=\int_{\R^d}\operatorname{div}\!\left(ze^{-u_N(z)}\right)\,dz
=\int_{\R^d}\left(d-z^T\nabla u_N(z)\right)e^{-u_N(z)}\,dz,
\]
and hence
\[
\int_{\R^d}z^T\nabla u_N(z)\,P_N(dz)=d.
\]
Convexity and $\nabla u_N(0)=0$ imply
$z^T\nabla u_N(z)\ge0$ for every $z\in\R^d$, which we use with Markov's inequality.
On $E_N$, local curvature at radius $r_N$ and monotonicity of radial
derivatives give $z^T\nabla u_N(z)\ge mr_N^2$ for $\norm z\ge r_N$.
Thus
\[
P_N(B_{r_N}^c)
\le\frac{1}{mr_N^2}\int_{\R^d}z^T\nabla u_N(z)\,P_N(dz)
=\frac{d}{m\rho^2N}\to0.
\]

It remains to prove on $E_N$ that $\eta_N^2\to0$.
Taking the derivative and using $\nabla U_N(\widehat\beta_N)=0$ gives, for every $z\in\R^d$,
\begin{align*}
\nabla u_N(z)
&=\widehat H_N^{-1/2}
\bigl[\nabla U_N(\widehat\beta_N+\widehat H_N^{-1/2}z)
      -\nabla U_N(\widehat\beta_N)\bigr]\\
&=\sum_{i=1}^N
\bigl[\sigma(x_i^T\widehat\beta_N+s_{N,i}^Tz)
      -\sigma(x_i^T\widehat\beta_N)\bigr]s_{N,i} +\tau^{-2}\widehat H_N^{-1}z
\\
&=\sum_{i=1}^N
\sigma'(x_i^T\widehat\beta_N)(s_{N,i}^Tz) s_{N,i}
+ \frac12 \sum_{i=1}^N \sigma''(x_i^T\widehat\beta_N)(s_{N,i}^Tz)^2 s_{N,i}
+ \tau^{-2}\widehat H_N^{-1}z + r_{N,3}(z)
\end{align*}
where the remainder
\[
r_{N,3}(z)
=\frac12\sum_{i=1}^N(s_{N,i}^Tz)^3s_{N,i}
\int_0^1(1-t)^2\sigma'''(x_i^T\widehat\beta_N+t s_{N,i}^Tz)\,dt.
\]
On $E_N$,
\[
\norm{r_{N,3}(z)}^2
\le\frac{\lVert\sigma'''\rVert_\infty^2}{36}
\norm{S_N^TS_N}_{\mathrm{op}}
\sum_{i=1}^N|s_{N,i}^Tz|^6
\le C\sum_{i=1}^N|s_{N,i}^Tz|^6.
\]
We also have
\begin{align*}
\sum_{i=1}^N\sigma'(x_i^T\widehat\beta_N)(s_{N,i}^Tz)s_{N,i}
&=\widehat H_N^{-1/2}
\left[\sum_{i=1}^N\sigma'(x_i^T\widehat\beta_N)x_ix_i^T\right]
\widehat H_N^{-1/2}z\\
&=\widehat H_N^{-1/2}
\left(\widehat H_N-\tau^{-2}I_d\right)
\widehat H_N^{-1/2}z\\
&=z-\tau^{-2}\widehat H_N^{-1}z.
\end{align*}
Thus, for every $z\in\R^d$,
\begin{align}
\nabla u_N(z)-z
&=\frac12\sum_{i=1}^N\sigma''(x_i^T\widehat\beta_N)
    (s_{N,i}^Tz)^2s_{N,i}+r_{N,3}(z),
\nonumber\\
\norm{r_{N,3}(z)}^2&\le C\sum_{i=1}^N|s_{N,i}^T z|^6.
\label{eq:dense_logistic_gradient}
\end{align}

Write $Z_N=Z^0_{N}-v_N$ with $Z^0_{N}\sim N(0,I_d)$.
The expectations involving $Z^0_{N}$ below are conditional on the observed data, so $s_{N,i}$ and $v_N$ are constants here.
Then expanding the square
\begin{align*}
\nabla u_N(Z_N)-Z_N-v_N
&=\frac12\sum_{i=1}^N\sigma''(x_i^T\widehat\beta_N)s_{N,i}
\left[(s_{N,i}^TZ^0_{N})^2-\norm{s_{N,i}}^2\right]\\
&\quad-\sum_{i=1}^N\sigma''(x_i^T\widehat\beta_N)s_{N,i}
(s_{N,i}^Tv_N)(s_{N,i}^TZ^0_{N})\\
&\quad+\frac12\sum_{i=1}^N\sigma''(x_i^T\widehat\beta_N)s_{N,i}
(s_{N,i}^Tv_N)^2+r_{N,3}(Z_N).
\end{align*}
For the Taylor remainder, \eqref{eq:dense_logistic_gradient} gives
\begin{align*}
\E\left(\norm{r_{N,3}(Z_N)}^2\right)
&\le C\sum_{i=1}^N
\E\left(|s_{N,i}^TZ^0_{N}-s_{N,i}^Tv_N|^6\right)\\
&\le C\sum_{i=1}^N\left[
\E\left(|s_{N,i}^TZ^0_{N}|^6\right)+|s_{N,i}^Tv_N|^6\right]\\
&\le C\sum_{i=1}^N\left[\norm{s_{N,i}}^6+|s_{N,i}^Tv_N|^6\right].
\end{align*}
By \eqref{eq:dense_logistic_bounds},
\begin{align*}
\sum_i\left[(s_{N,i}^Tv_N)^2\norm{s_{N,i}}^2+\norm{s_{N,i}}^6\right]
&\le C\frac{d^3}{N^2},\\
\sum_i(s_{N,i}^Tv_N)^4
&\le C\frac dN\norm{v_N}^4.
\end{align*}

Moreover, $\max_{1\le i\le N}|s_{N,i}^Tv_N|^2\le Cd^3/N^2\to0$,
since $d^3/N^2=(d^{3/2}/N)^2\to0$. Thus, for all sufficiently large
$N$, uniformly on $E_N$,
\[
\sum_{i=1}^N|s_{N,i}^Tv_N|^6
\le\left(\max_{1\le i\le N}|s_{N,i}^Tv_N|^2\right)
\sum_{i=1}^N(s_{N,i}^Tv_N)^4
\le\sum_{i=1}^N(s_{N,i}^Tv_N)^4.
\]

Using the boundedness of $\sigma''$
and the bounds \eqref{eq:dense_logistic_bounds},
\begin{align*}
&\E\left(\left\lVert\sum_{i=1}^N
\sigma''(x_i^T\widehat\beta_N)s_{N,i}
(s_{N,i}^Tv_N)(s_{N,i}^TZ^0_{N})\right\rVert^2\right)\\
&\qquad\le C\sum_{i=1}^N(s_{N,i}^Tv_N)^2
\E\left((s_{N,i}^TZ^0_{N})^2\right)\\
&\qquad=C\sum_{i=1}^N(s_{N,i}^Tv_N)^2\norm{s_{N,i}}^2.
\end{align*}
The same bounds control the constant term in the decomposition
\[
\left\lVert\frac12\sum_{i=1}^N
\sigma''(x_i^T\widehat\beta_N)s_{N,i}(s_{N,i}^Tv_N)^2
\right\rVert^2
\le C\sum_{i=1}^N(s_{N,i}^Tv_N)^4.
\]

For each $i, j$,
\begin{align*}
&\E\left(
\bigl[(s_{N,i}^TZ^0_{N})^2-\norm{s_{N,i}}^2\bigr]
\bigl[(s_{N,j}^TZ^0_{N})^2-\norm{s_{N,j}}^2\bigr]\right)
\\
&=\E\left((s_{N,i}^TZ^0_{N})^2(s_{N,j}^TZ^0_{N})^2\right)
-\norm{s_{N,i}}^2\norm{s_{N,j}}^2\\
&=2(s_{N,i}^Ts_{N,j})^2.
\end{align*}
This implies the identity
\begin{align*}
&\E\left(\left\lVert\frac12\sum_{i=1}^N
\sigma''(x_i^T\widehat\beta_N)s_{N,i}
\bigl[(s_{N,i}^TZ^0_{N})^2-\norm{s_{N,i}}^2\bigr]
\right\rVert^2\right)\\
&\quad=\frac14\sum_{i,j=1}^N
\sigma''(x_i^T\widehat\beta_N)\sigma''(x_j^T\widehat\beta_N)
(s_{N,i}^Ts_{N,j})\\
&\qquad\times\E\left(
\bigl[(s_{N,i}^TZ^0_{N})^2-\norm{s_{N,i}}^2\bigr]
\bigl[(s_{N,j}^TZ^0_{N})^2-\norm{s_{N,j}}^2\bigr]\right)\\
&\quad=\frac12\sum_{i,j=1}^N
\sigma''(x_i^T\widehat\beta_N)\sigma''(x_j^T\widehat\beta_N)
(s_{N,i}^Ts_{N,j})^3.
\end{align*}
Since $\widehat H_N\succeq cNI_d$,
$
0\preceq S_NS_N^T\preceq(cN)^{-1}X_NX_N^T,
$
then the Schur product theorem implies
\[
\left[(s_{N,i}^Ts_{N,j})^3\right]_{i,j=1}^N
\preceq\frac1{(cN)^3}
\left[(x_i^Tx_j)^3\right]_{i,j=1}^N.
\]

Combining the preceding estimates gives, on $E_N$ and for all
sufficiently large $N$,
\begin{align}
\eta_N^2
&\le C\left(\frac1{N^3}\sum_{i,j=1}^N|x_i^Tx_j|^3
            +\frac{d^3}{N^2}+\frac dN\norm{v_N}^4\right).
\end{align}
For $i\ne j$, row independence allows us to use
\eqref{eq:x_moment_ub} conditionally on $x_i$.
H\"older's inequality with exponents $4/3$ and $4$ then yields
\begin{align*}
\E_*\!\left(|x_i^Tx_j|^3\mid x_i\right)
&\le\left\{\E_*\!\left(|x_i^Tx_j|^4\mid x_i\right)\right\}^{3/4}
\\
&\le M^{3/4}\norm{x_i}^3
\\
&\le C d^{3/2}.
\end{align*}
For the diagonal $i=j$, $|x_i^Tx_i|^3=\norm{x_i}^6\le(1+K^2)^3d^3$.
These imply the upper bound
\begin{align*}
\E_*\!\left(\frac1{N^3}\sum_{i,j=1}^N|x_i^Tx_j|^3\right)
&\le\frac{C}{N^3}\left(N(N-1)d^{3/2}+Nd^3\right)\\
&\le C\left(\frac{d^{3/2}}N+\frac{d^3}{N^2}\right)\to0.
\end{align*}
By Markov's inequality, the first term in the bound for $\eta_N^2$
tends to zero in $\Prob_*$ probability. The second term tends to zero
since $d^3/N^2=(d^{3/2}/N)^2\to0$.

For the third term,
\eqref{eq:dense_logistic_bounds} gives, uniformly on $E_N$,
\[
\frac dN\norm{v_N}^4
\le C\frac{d^5}{N^3}
=C\left(\frac{d^{5/3}}N\right)^3\to0.
\]

Since $\Prob_*(E_N)\to1$, it follows that
$\eta_N^2\to0$ in $\Prob_*$ probability. 
The two tail bounds also tend to zero uniformly on $E_N$, so $G_N(B_{r_N})\to1$ in $\Prob_*$ probability. Substituting these limits and $\eta_N^2\to0$ into \eqref{eq:tv_ub} proves $\norm{\Pi_N-\mathcal G_N}_{TV}\to0$ in $\Prob_*$ probability.

\end{proof}

\bibliography{references}

\end{document}